\providecommand{\tabularnewline}{\\}
\numberwithin{equation}{section}
\numberwithin{figure}{section}
\theoremstyle{plain}
\newtheorem{thm}{\protect\theoremname}
  \theoremstyle{plain}
  \newtheorem{prop}[thm]{\protect\propositionname}
  \theoremstyle{definition}
  \newtheorem{defn}[thm]{\protect\definitionname}
  \theoremstyle{remark}
  \newtheorem{rem}[thm]{\protect\remarkname}
  \theoremstyle{definition}
  \newtheorem{example}[thm]{\protect\examplename}
  \theoremstyle{plain}
  \newtheorem{cor}[thm]{\protect\corollaryname}
\newenvironment{lyxlist}[1]
{\begin{list}{}
{\settowidth{\labelwidth}{#1}
 \setlength{\leftmargin}{\labelwidth}
 \addtolength{\leftmargin}{\labelsep}
 }}
{\end{list}}
\newcommand\posints{\ensuremath{\mathbb N^+}}
\newcommand\spoly{\ensuremath{\mathrm{spoly}}}
\newcommand\redmod[1]{\ensuremath{\underset{#1}{\longrightarrow}}}
\newcommand\supp{\ensuremath{\mathrm{supp}}}
\newcommand\allorders{\ensuremath{\mathcal T}}
\newcommand\lp{\ensuremath{\mathrm{lp}}}
\newcommand\mlp{\ensuremath{\mathrm{mlp}}}
\newcommand\rejects{\ensuremath{\mathit{rejects}}}
\newcommand\hilfun{\ensuremath{H}}
\renewcommand\monomials{\ensuremath{\mathbb{T}^n}}
\renewcommand\field{K}
  \providecommand{\corollaryname}{Corollary}
  \providecommand{\definitionname}{Definition}
  \providecommand{\examplename}{Example}
  \providecommand{\propositionname}{Proposition}
  \providecommand{\remarkname}{Remark}
\providecommand{\theoremname}{Theorem}
\begin{document}

\title[Dynamic Gr\"obner basis algorithms]{Reducing the size and number of linear programs in a dynamic Gr\"obner
basis algorithm}

\author{Massimo Caboara}

\address{Universit\'a di Pisa}

\email{caboara@dm.unipi.it}

\author{John Perry}

\address{University of Southern Mississippi}

\email{john.perry@usm.edu}

\urladdr{www.math.usm.edu/perry/}

\subjclass[2000]{13P10\and 68W30}
\begin{abstract}
The dynamic algorithm to compute a Gr\"obner basis is nearly twenty
years old, yet it seems to have arrived stillborn; aside from two
initial publications, there have been no published followups. One
reason for this may be that, at first glance, the added overhead seems
to outweigh the benefit; the algorithm must solve many linear programs
with many linear constraints. This paper describes two methods that
reduce both the size and number of these linear programs.
\end{abstract}
\maketitle

\section{Introduction}

Since the first algorithm to compute Gr\"obner bases was described
by \cite{Buchberger65}, they have become a standard tool for applied,
computational, and theoretical algebra. Their power and promise has
stimulated a half-century of research into computing them efficiently.
Important advances have resulted from reducing the number of pairs
considered~\cite{Buchberger65}\cite{Buchberger79}\cite{CKR02}\cite{Fau02Corrected}\cite{MoraMoeller86},
improving the reduction algorithm~\cite{BrickSlimgb_Journal}\cite{Fau99}\cite{YanGeobuckets},
and forbidding some reductions~\cite{ApelHemmecke05}\cite{Fau02Corrected}\cite{ZharkovBlinkov96}.

The Gr\"obner basis property depends on the choice of term ordering:
a basis can be Gr\"obner with respect to one term ordering, but not
to another. Researchers have studied ways to find an ordering that
efficiently produces a basis ``suitable'' for a particular problem~\cite{Tran2004}\cite{Tran2007},
and to convert a basis that is Gr\"obner with respect to one ordering
to a basis that is Gr\"obner with respect to another~\cite{CDR_HDriven}\cite{CKM97GBWalk}\cite{FGLM93}\cite{Traverso_HDriven}.

Another approach would be to begin without \emph{any} ordering, but
to compute both a basis \emph{and} an ordering for which that basis
is Gr\"obner. Such a ``dynamic'' algorithm would change its ordering
the moment it detected a ``more efficient'' path towards a Gr\"obner
basis, and would hopefully conclude with a smaller basis more quickly.

Indeed, this question \emph{was} posed nearly twenty years ago, and
was studied both theoretically and practically in two separate papers~\cite{CaboaraDynAlg}\cite{GS93}.
The first considered primarily questions of discrete geometry; the
dynamic algorithm spills out as a nice application of the ideas, but
the authors did not seriously consider an implementation. The second
described a study implementation that refines the ordering using techniques
from linear programming, and focused on the related algebraic questions.

Aside from one preprint~\cite{GolubitskyDynamicGB}, there has been
no continuation of this effort. There certainly are avenues for study;
for example, this observation at the conclusion of \cite{CaboaraDynAlg}:
\begin{quote}
In a number of cases, after some point is reached in the refining
of the current order, further refining is useless, even damaging.
\ldots{}The exact determination of this point is not easy, and an
algorithm for its determination is not given.
\end{quote}
An example of the damage that can occur is that the number and size
of the linear programs grow too large. The temporary solution of~\cite{CaboaraDynAlg}
was to switch the refiner off at a predetermined point. Aside from
the obvious drawback that this prevents useful refinement after this
point, it also forces unnecessary refinement before it! This can be
especially costly when working with systems rich in dense polynomials.

This paper presents two new criteria that signal the refiner not only
to switch off when it is clearly not needed, but also to switch back
on when there is a high probability of useful refinement. The criteria
are based on simple geometric insights related to linear programming.
The practical consequence is that these techniques reduce both the
number and the size of the associated linear programs by significant
proportions.

\section{Background}

This section lays the groundwork for what is to follow, in terms of
both terminology and notation. We have tried to follow the vocabulary
and notation of \cite{CaboaraDynAlg}, with some modifications.

Section~\ref{sub: Grbner bases} reviews the traditional theory of
Gr\"obner bases, inasmuch as it pertains to the (static) Buchberger
algorithm. Section~\ref{sub: dynamic algorithm} describes the motivation
and background of the dynamic algorithm, while Section~\ref{sub: Caboara's implementation}
reviews Caboara's specification. Section~\ref{sub: geometric view of dynamic algorithm}
deals with some geometric considerations which will prove useful later.

\subsection{\label{sub: Grbner bases}Gr\"obner bases and the static Buchberger
algorithm}

Let $m,n\in\posints$, $\field$ a field, and $R=\pring[n]$. We typically
denote polynomials by $f$, $g$, $h$, $p$, $q$, and $r$, and
the ideal of $R$ generated by any $F\subseteq R$ as $\ideal{F}$.
Following~\cite{CaboaraDynAlg}, we call a product of powers of the
variables of $R$ a \textbf{term}, and a product of a term and an
element of $\field$ a \textbf{monomial}. We typically denote constants
by letters at the beginning of the alphabet, and terms by $t$, $u$,
$v$. We denote the exponent vector of a term by its name in boldface;
so, if $n=4$ and $t=x_{1}^{2}x_{3}x_{4}^{20}$, then $\mathbf{t}=\left(2,0,1,20\right)$.

An ordering $\sigma$ on the set $\monomials$ of all terms of $R$
is \textbf{admissible} if it is a well-ordering that is compatible
with divisibility and multiplication; by ``compatible with divisibility,''
we mean that $t\mid u$ and $t\neq u$ implies that $t<_{\sigma}u$,
and by ``compatible with multiplication,'' we mean that $t<_{\sigma}u$
implies that $tv<_{\sigma}uv$. We consider only admissible orderings,
so henceforth we omit the qualification.

We write $\allorders$ for the set of all term orderings, and denote
orderings by Greek letters $\mu$, $\sigma$, and $\tau$. For any
$p\in R$ we write $\lt[\sigma]\left(p\right)$ and $\lc[\sigma]\left(p\right)$
for the leading term and leading coefficient of $p$ with respect
with $\sigma$. If the value of $\sigma$ is clear from context or
does not matter, we simply write $\lt\left(p\right)$ and $\lc\left(p\right)$.
For any $F\subseteq R$, we write $\lt[\sigma]\left(F\right)=\left\{ \lt[\sigma]\left(f\right):f\in F\right\} $.

Let $I$ be an ideal of $R$, and $G\subseteq I$. If for every $p\in I$
there exists $g\in G$ such that $\lt\left(g\right)\mid\lt\left(p\right)$,
then we say that $G$ is a \textbf{Gr\"obner basis of }$I$. This
property depends on the ordering; if $\sigma,\tau\in\allorders$,
it is quite possible for $G$ to be a Gr\"obner basis with respect
to $\sigma$, but not with respect to $\tau$.

It is well known that every polynomial ideal has a finite Gr\"obner
basis, regardless of the choice of ordering. Actually \emph{computing}
a Gr\"obner basis requires a few more concepts. Let $f,p,r\in R$.
We say that $p$ \textbf{reduces to }r \textbf{modulo} $f$, and write
$p\redmod{f}r$, if there exist $a\in\field$ and $t\in\monomials$
such that $p-atf=r$ and $\lt\left(r\right)<\lt\left(p\right)$. Similarly,
we say that $p$ \textbf{reduces to} $r$ \textbf{modulo} $G$, and
write
\[
p\redmod{G}r,
\]
if there exist $\left\{ i_{1},\ldots,i_{\ell}\right\} \subseteq\left\{ 1,\ldots,\#G\right\} $
such that $p\redmod{g_{i_{1}}}r_{1}$, $r_{1}\redmod{g_{i_{2}}}r_{2}$,
\ldots{}, $r_{\ell-1}\redmod{g_{i_{\ell}}}r_{\ell}=r$. If there
no longer exists $g\in G$ such that $\lt\left(g\right)$ divides
a $\lt\left(r\right)$, we call $r$ a \textbf{remainder of $p$ modulo
$G$}.
\begin{prop}[Buchberger's Characterization, \cite{Buchberger65}]
\label{prop: buchberger's characterization}$G$ is a Gr\"obner
basis of $I$ if and only if the \textbf{\emph{S}}\textbf{-polynomial}
of every $f,g\in I\backslash\left\{ 0\right\} $, or
\[
\spoly\left(f,g\right)=\lc\left(g\right)\cdot\frac{\lcm\left(\lt\left(f\right),\lt\left(g\right)\right)}{\lt\left(f\right)}\cdot f-\lc\left(f\right)\cdot\frac{\lcm\left(\lt\left(f\right),\lt\left(g\right)\right)}{\lt\left(g\right)}\cdot g,
\]
reduces to zero modulo $G$.
\end{prop}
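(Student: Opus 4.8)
The plan is to prove both directions of the equivalence, with the forward direction being nearly immediate and the reverse direction carrying the real content. For the forward direction, suppose $G$ is a Gr\"obner basis of $I$. Since $f,g\in I\setminus\{0\}$ and $I$ is an ideal, $\spoly(f,g)\in I$. If $\spoly(f,g)=0$ we are done trivially; otherwise, since $G$ is a Gr\"obner basis, there is some $h\in G$ with $\lt(h)\mid\lt(\spoly(f,g))$, so we may perform one reduction step $\spoly(f,g)\redmod{h}r_1$ with $\lt(r_1)<\lt(\spoly(f,g))$ and $r_1\in I$. Iterating, and invoking the fact that $<$ is a well-ordering (admissibility) so the strictly decreasing sequence of leading terms must terminate, we reach $r_\ell$ with no $\lt(h)$ dividing $\lt(r_\ell)$; since $r_\ell\in I$ and $G$ is a Gr\"obner basis, this forces $r_\ell=0$. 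Hence $\spoly(f,g)\redmod{G}0$.

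For the reverse direction, assume every S-polynomial of elements of $I\setminus\{0\}$ reduces to zero modulo $G$; we must show $G$ is a Gr\"obner basis, i.e.\ that for every $p\in I\setminus\{0\}$ there is $g\in G$ with $\lt(g)\mid\lt(p)$. The standard approach is to take $p\in I$ and write it in terms of the generators. Here there is a subtlety to flag: as literally stated, the hypothesis quantifies over \emph{all} $f,g\in I\setminus\{0\}$, not merely over a fixed finite generating set $G$, so in fact one should argue that $G$ itself generates $I$ (this is implicit in calling $G$ a candidate Gr\"obner basis of $I$, and it follows once we know every element of $I$ reduces to zero modulo $G$) — I would either note this is part of the standing setup or recover it en route. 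Granting $I=\ideal{G}$, write $p=\sum_{i=1}^{s}h_i g_i$ with $g_i\in G$, and set $t=\max_{\sigma}\{\lt(h_i g_i)\}$, the maximum taken over the terms actually appearing. Clearly $\lt(p)\le_\sigma t$. If $\lt(p)=t$, then $t$ must equal $\lt(h_i)\lt(g_i)$ for some $i$ with no cancellation at that term, so $\lt(g_i)\mid\lt(p)$ and we are done.

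The crux is the case $\lt(p)<_\sigma t$: cancellation occurs among the top terms. Following the classical Buchberger argument, let $S$ be the set of indices $i$ with $\lt(h_i g_i)=t$; the coefficients of $t$ in $\sum_{i\in S}h_i g_i$ must sum to zero. The key combinatorial lemma is that such a "telescoping cancellation" can be rewritten as a $K$-linear combination of S-polynomials $\spoly(g_i,g_j)$ (suitably shifted by terms), each of which has leading term strictly less than $t$. By hypothesis each $\spoly(g_i,g_j)$ reduces to zero modulo $G$, hence has a representation in terms of the $g_k$ all of whose terms are $\le_\sigma$ its leading term, which is $<_\sigma t$; substituting these back yields a new representation of $p$ whose associated maximum is strictly smaller than $t$. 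Since $<_\sigma$ is a well-ordering, this descent terminates, and we must eventually reach a representation with maximum equal to $\lt(p)$, completing the proof. The main obstacle is precisely this rewriting-and-descent step — making rigorous the claim that top-term cancellation is generated by S-polynomials (the "diamond"/Schreyer-type lemma) and checking that the well-ordering genuinely forbids an infinite regress; everything else is bookkeeping.
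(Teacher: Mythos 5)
The paper cites this proposition from Buchberger's thesis and gives no in-paper proof, so there is nothing to compare against directly; here is an assessment of your plan on its own merits. Your forward direction is complete and correct: $\spoly\left(f,g\right)\in I$, each reduction step against $G\subseteq I$ stays in $I$, the well-ordering of $<_\sigma$ forces the reduction chain to terminate, and a nonzero remainder with no divisible leading term would contradict the Gr\"obner basis property. Your flag about the quantifier is also a good catch: the paper writes $f,g\in I\backslash\left\{0\right\}$, but the finitely checkable version of the criterion that Algorithm~\ref{alg: Static Algorithm} tacitly relies on quantifies over pairs from $G$, together with the standing assumption that $G$ generates $I$; without that assumption the criterion is simply false (take $I=\langle x,y\rangle$ and $G=\left\{x\right\}$, where the only S-polynomial from $G$ is $0$ yet $G$ is no Gr\"obner basis).

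The one genuine gap---which you yourself identify as ``the main obstacle''---is the cancellation lemma: the claim that if $\sum_{i\in S}c_iu_ig_i$ has $\lt\left(u_ig_i\right)=t$ for every $i\in S$ and the coefficients of $t$ sum to zero, then this sum equals a $\field$-linear combination of monomial multiples of $\spoly\left(g_i,g_j\right)$, each with leading term strictly below $t$. You invoke it without proving it, so as written your argument is a sketch of the classical proof rather than a proof. The lemma is not difficult (insert telescoping differences $c_iu_ig_i-c_{i+1}u_{i+1}g_{i+1}$ and observe that, after normalizing leading coefficients, each difference is a monomial multiple of an S-polynomial), but it must be written out. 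Once it is, the descent closes correctly: the well-ordering guarantees a representation of $p$ whose maximal term $t$ is minimal, and substituting the reduced S-polynomial expressions would produce a strictly smaller $t$ unless $t=\lt\left(p\right)$, in which case some $\lt\left(g_i\right)$ divides $\lt\left(p\right)$ as required.
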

For convenience, we extend the definition of an $S$-poly\-nomial
to allow for $0$:
\begin{defn}
Let $p\in R$. The \textbf{\emph{S}}\textbf{-polynomial of $p$ and
$0$} is $p$.
\end{defn}
Buchberger's Characterization of a Gr\"obner basis leads naturally
to the classical, \textbf{static Buchberger algorithm} to compute
a Gr\"obner basis; see Algorithm~\ref{alg: Static Algorithm}, which
terminates on account of the Hilbert Basis Theorem (applied to $\ideal{\lt\left(G\right)}$).
There are a number of ambiguities in this algorithm: the strategy
for selecting pairs $\left(p,q\right)\in P$, for instance, or how
precisely to reduce the $S$-poly\-nomials. These questions have
been considered elsewhere, and the interested reader can consult the
references cited in the introduction.
\begin{figure}
\rule[0.5ex]{1\columnwidth}{1pt}

\begin{raggedright}
\textbf{algorithm} \emph{static\_buchberger\_algorithm}
\par\end{raggedright}

\begin{raggedright}
\textbf{inputs:}
\par\end{raggedright}
\begin{itemize}
\item $F\subseteq R$
\item $\sigma\in\allorders$
\end{itemize}
\begin{raggedright}
\textbf{outputs:} $G\subseteq R$, a Gr\"obner basis of $\ideal{F}$
with respect to $\sigma$
\par\end{raggedright}

\begin{raggedright}
\textbf{do:}
\par\end{raggedright}
\begin{enumerate}
\item Let $G=\left\{ \right\} $, $P=\left\{ \left(f,0\right):f\in F\right\} $
\item \textbf{while} $P\neq\emptyset$

\begin{enumerate}
\item Select $\left(p,q\right)\in P$ and remove it
\item Let $r$ be a remainder of $\spoly\left(p,q\right)$ modulo $G$
\item \textbf{if} $r\neq0$

\begin{enumerate}
\item Add $\left(g,r\right)$ to $P$ for each $g\in G$
\item Add $r$ to $G$
\end{enumerate}
\end{enumerate}
\item \textbf{return} $G$
\end{enumerate}
\caption{\label{alg: Static Algorithm}The traditional Buchberger algorithm}

\rule[0.5ex]{1\columnwidth}{1pt}
\end{figure}

\begin{rem}
Algorithm~\ref{alg: Static Algorithm} deviates from the usual presentation
of Buchberger's algorithm by considering $S$-poly\-nomials of the
inputs with 0, rather than with each other. This approach accommodates
the common optimization of interreducing the inputs.
\end{rem}

\subsection{\label{sub: dynamic algorithm}The dynamic algorithm}

Every admissible ordering can be described using a real matrix $M$~\cite{Robbiano86}.
Terms $u,v$ are compared by comparing lexicographically $M\mathbf{u}$
and $M\mathbf{v}$. Two well-known orders are \textbf{lex} and \textbf{grevlex};
the former can be represented by an identity matrix, and the latter
by an upper-triangular matrix whose non-zero elements are identical.
\begin{example}
Consider the well-known Cyclic-4 system,
\begin{align*}
F & =\left(x_{1}+x_{2}+x_{3}+x_{4},x_{1}x_{2}+x_{2}x_{3}+x_{3}x_{4}+x_{4}x_{1},\right.\\
 & \phantom{=(x_{1}}x_{1}x_{2}x_{3}+x_{2}x_{3}x_{4}+x_{3}x_{4}x_{1}+x_{4}x_{1}x_{2},\\
 & \left.\phantom{=(x_{1}}x_{1}x_{2}x_{3}x_{4}-1\right).
\end{align*}

\begin{enumerate}
\item If we compute a Gr\"obner basis of $\ideal{F}$ with respect to lex,
we obtain a Gr\"obner basis with 6 polynomials made up of 18 distinct
terms.
\item If we compute a Gr\"obner basis of $\ideal{F}$ with respect to grevlex,
we obtain a Gr\"obner basis with 7 polynomials made up of 24 distinct
terms.
\item If we compute a Gr\"obner basis of $\ideal{F}$ according to the
matrix ordering
\[
\left(\begin{array}{cccc}
1 & 3 & 2 & 4\\
1 & 1 & 1 & 0\\
1 & 1 & 0 & 0\\
1 & 0 & 0 & 0
\end{array}\right),
\]
we obtain a Gr\"obner basis with 5 polynomials and 19 distinct terms.
\end{enumerate}
\end{example}
We can order any finite set of terms using a weight vector in $\mathbb{N}^{n}$,
and if necessary, we can extend a weight vector to an admissible ordering
by adding $n-1$ linearly independent rows. In the example above,
we extended the weight vector $\left(1\;3\;2\;4\right)$ by adding
three more rows.

The goal of the dynamic algorithm is to discover a ``good'' ordering
for given input polynomials during the Gr\"obner basis computation.
Algorithm~\ref{alg: Dynamic Buchberger Algorithm} describes a \textbf{dynamic
Buchberger algorithm}. As with the static algorithm, its basic form
contains a number of unresolved ambiguities:
\begin{figure}
\rule[0.5ex]{1\columnwidth}{1pt}

\begin{raggedright}
\textbf{algorithm} \emph{dynamic\_buchberger\_algorithm}
\par\end{raggedright}

\begin{raggedright}
\textbf{inputs:} $F\subseteq R$
\par\end{raggedright}

\begin{raggedright}
\textbf{outputs:} $G\subseteq R$ and $\sigma\in\allorders$ such
that $G$ is a Gr\"obner basis of $\ideal{F}$ with respect to $\sigma$
\par\end{raggedright}

\begin{raggedright}
\textbf{do:}
\par\end{raggedright}
\begin{enumerate}
\item Let $G=\left\{ \right\} $, $P=\left\{ \left(f,0\right):f\in F\right\} $,
$\sigma\in\allorders$
\item \textbf{while} $P\neq\emptyset$\label{enu: while loop of dynamic algorithm}

\begin{enumerate}
\item Select $\left(p,q\right)\in P$ and remove it
\item \label{enu: r full reduction of s(p,q) (dynamic)}Let $r$ be a remainder
of $\spoly\left(p,q\right)$ modulo $G$
\item \textbf{if} $r\neq0$

\begin{enumerate}
\item Add $\left(g,r\right)$ to $P$ for each $g\in G$
\item Add $r$ to $G$
\end{enumerate}
\item \label{enu: Select a new order}Select $\tau\in\allorders$
\item \label{enu: modify P after new order}Let $P=P\cup\left\{ \left(p,q\right):p,q\in G,\; p\neq q,\;\lt[\sigma]\left(p\right)\neq\lt[\tau]\left(p\right)\right\} $
\item Let $\sigma=\tau$
\end{enumerate}
\item \textbf{return} $G$, $\sigma$
\end{enumerate}
\caption{\label{alg: Dynamic Buchberger Algorithm}A basic, dynamic Buchberger
algorithm}

\rule[0.5ex]{1\columnwidth}{1pt}
\end{figure}

\begin{itemize}
\item How shall we select a pair?
\item How do we determine a good ordering?
\item How do we choose the ordering?
\item When should we select a new ordering?
\item Does the algorithm actually terminate?
\end{itemize}

\subsection{\label{sub: Caboara's implementation}Caboara's implementation}

The only implementation of a dynamic algorithm up to this point was
that of~\cite{CaboaraDynAlg}; it has since been lost. This section
reviews that work, adapting the original notation to our own, though
any differences are quite minor.

\subsubsection{How shall we select a pair?}

Caboara used the \textbf{sugar strategy}, which selects $\left(p,q\right)\in P$
such that the degree of the homogenization of $\spoly\left(p,q\right)$
is minimal~\cite{BigattiCaboaraRobbiano10}\cite{SugarStrategy91}.
Pairs were pruned using the Gebauer-M\"oller algorithm~\cite{GM88}.

\subsubsection{How do we determine a good ordering?}

Both \cite{GS93,CaboaraDynAlg} suggest using the \textbf{Hilbert-Poincar\'e
function} to evaluate orderings. Roughly speaking, the Hilbert-Poincar\'e
function of an ideal $I$ in a ring $R$, denoted $\hilfun_{R/I}\left(d\right)$,
indicates:
\begin{itemize}
\item if $I$ is inhomogeneous, the number of elements in $R/I$ of degree
no greater than $d$;
\item if $I$ is homogeneous, the number of elements in $R/I$ of degree
$d$.
\end{itemize}
The homogeneous case gives us the useful formula
\[
H_{R/I}\left(d\right)=\dim_{\field}\left(R/I\right)_{d},
\]
where the dimension is of the subspace of degree-$d$ terms of the
vector space $R/I$. If $G$ is a Gr\"obner basis, then $H_{R/\ideal{G}}=H_{R/\ideal{\lt\left(G\right)}}$,
and it is easy to compute the related \textbf{Hilbert series} or \textbf{Hilbert}
\textbf{polynomial} for $\hilfun_{R/\ideal{G}}\left(d\right)$ from
$\ideal{\lt\left(G\right)}$ \cite{BayerStillman92}\cite{Bigatti97}\cite{RouneHilbert2010}.
Many textbooks, such as~\cite{KR05}, contain further details.

In the homogeneous setting, the Hilbert function is an invariant of
the ideal regardless of the ordering. Thus, we can use it to measure
``closeness'' of a basis to a Gr\"obner basis. Both the static
and dynamic algorithms add a polynomial $r$ to the basis $G$ if
and only if $\ideal{\lt\left(G\right)}\subsetneq\ideal{\lt\left(G\cup\left\{ r\right\} \right)}$.
(See Figure~\ref{fig: Adding r to G makes R/<lm(G)> smaller}.)
\begin{figure}
\begin{centering}
\includegraphics[scale=0.4]{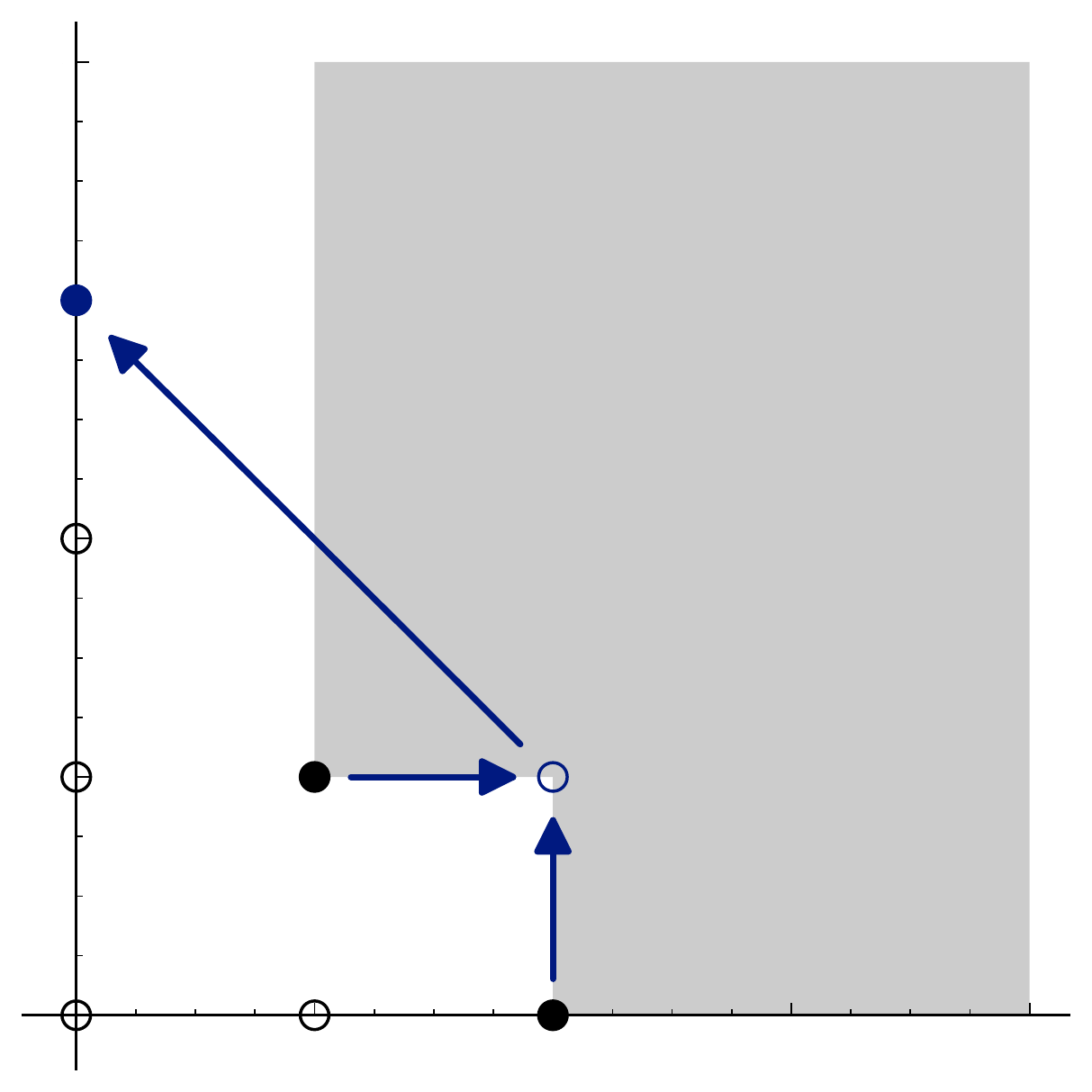}
\par\end{centering}

\caption{\label{fig: Adding r to G makes R/<lm(G)> smaller}Adding $r$ to
$G$ makes $R/\ideal{\lt\left(G\right)}$, and therefore $H_{\ideal{\lt\left(G\right)}}$,
smaller. The example here is taken from $G=\left\{ x^{2}+y^{2}-4,xy-1\right\} $
with $x>y$; running the Buchberger algorithm on the pair $\left(g_{1},g_{2}\right)$
gives us $\lt\left(r\right)=y^{3}$, which removes all multiples of
$y^{3}+\ideal{G}$ from $R/\ideal{G}$, thereby decreasing $H_{\ideal{\lt\left(G\right)}}$.}
\end{figure}
 If we denote $T=\ideal{\lt\left(G\right)}$ and $U=\ideal{\lt\left(G\cup\left\{ r\right\} \right)}$,
then $T\subsetneq U$, so for all $d$,
\[
\hilfun_{R/T}\left(d\right)=\dim_{\field}\left(R/T\right)_{d}\geq\dim_{\field}\left(R/U\right)_{d}=\hilfun_{R/U}\left(d\right).
\]
If the choice of ordering means that we have two possible values for
$U$, we should aim for the ordering whose Hilbert function is \emph{smaller
in the long run}.

Since $G$ is a Gr\"obner basis only once we complete the algorithm,
how can we compute the Hilbert function of its ideal? We do not! Instead,
we \emph{approximate} it using a \textbf{tentative Hilbert function}
$H_{R/\ideal{\lt\left(G\right)}}\left(d\right)$. This usually leads
us in the right direction, even when the polynomials are inhomogeneous~\cite{CaboaraDynAlg}.

\subsubsection{How do we choose the ordering?}

A \textbf{potential leading term} (PLT) of $r\in R$ is any term $t$
of $r$ for which there exists an admissible ordering $\sigma$ such
that $\lt[\sigma]\left(r\right)=t$. As long as $G$ is finite, there
is a finite set of equivalence classes of all monomial orderings.
We call each equivalence class a \textbf{cone associated to $G$},
and denote by $C\left(\sigma,G\right)$ the cone associated to $G$
that contains a specific ordering $\sigma$. When $g\in G$, we refer
to $C\left(\sigma,t,g\right)$ as \textbf{the cone associated to $G$
that guarantees} $\lt[\sigma]\left(g\right)=t$.

Suppose $t\in\supp\left(r\right)$ and we want to choose $\sigma$
such that $\lt[\sigma]\left(r\right)=t$. For each $u\in\supp\left(r\right)\backslash\left\{ t\right\} $,
we want $t>u$. This means $\sigma\mathbf{t}>\sigma\mathbf{u}$, or
$\sigma\left(\mathbf{t}-\mathbf{u}\right)>0$. A vector will suffice
to determine $\sigma$; and we can find such a vector using the system
of linear inequalities
\[
\lp\left(y,t,r\right)=\left\{ y_{k}>0\right\} _{k=1}^{n}\cup\left\{ \sum_{k=1}^{n}y_{k}\left(t_{k}-u_{k}\right)>0\right\} _{u\in\supp\left(r\right)\backslash\left\{ t\right\} }.
\]
(Here, $t_{k}$ and $u_{k}$ denote the $k$th entry of $\mathbf{t}$
and\textbf{ $\mathbf{u}$}, respectively. To avoid confusion with
the variables of the polynomial ring, we typically use $y$'s to denote
the unknown values of linear programs.) Since every cone $C\left(\sigma,G\right)$
is defined by some system of linear inequalities, this approach successfully
turns a difficult geometric problem into a well-studied algebraic
problem that we can solve using techniques from linear programming.
\begin{prop}[Propositions 1.5, 2.3 of \cite{CaboaraDynAlg}]
The cone $C\left(\sigma,F\right)$ can be described using a union
of such systems, one for each $f\in F$.\end{prop}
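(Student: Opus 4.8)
The plan is to unwind the definition of $C\left(\sigma,F\right)$ and exhibit it as the common solution set of the systems $\lp\left(y,\lt[\sigma]\left(f\right),f\right)$, one per $f\in F$. Write $t_{f}=\lt[\sigma]\left(f\right)$. By definition, $C\left(\sigma,F\right)$ is the equivalence class of $\sigma$, so it consists of exactly those orderings $\tau$ with $\lt[\tau]\left(f\right)=t_{f}$ for \emph{every} $f\in F$. This is a conjunction of conditions indexed by $f$; writing $C_{f}$ for the set of orderings $\tau$ with $\lt[\tau]\left(f\right)=t_{f}$, we get at once
\[
C\left(\sigma,F\right)=\bigcap_{f\in F}C_{f}.
\]

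The heart of the matter is the single-polynomial case: $C_{f}$ is described by $\lp\left(y,t_{f},f\right)$. This is essentially Proposition~1.5 of \cite{CaboaraDynAlg} together with the discussion preceding the present statement. One direction is the computation already carried out there: if $y$ has positive entries and makes $t_{f}$ strictly larger than every other term of $f$ --- i.e.\ $y$ solves $\lp\left(y,t_{f},f\right)$ --- then $y$ extends to an admissible ordering whose leading term on $f$ is $t_{f}$, so it witnesses a point of $C_{f}$. Conversely, given $\tau\in C_{f}$, its matrix representation (Robbiano~\cite{Robbiano86}) shows that the restriction of $\tau$ to the finite set $\supp\left(f\right)$ is realised by a single weight vector with positive entries, which must then make $t_{f}$ maximal among the terms of $f$ and hence solves $\lp\left(y,t_{f},f\right)$. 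So $C_{f}$ is described by the system $\lp\left(y,t_{f},f\right)$.

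Assembling the pieces is then immediate. Since $F$ is finite, $\bigcup_{f\in F}\lp\left(y,t_{f},f\right)$ is again a finite system of linear inequalities, and a vector satisfies it if and only if it satisfies each $\lp\left(y,t_{f},f\right)$, if and only if it lies in every $C_{f}$, if and only if it lies in $C\left(\sigma,F\right)$. A word on terminology is in order: here a ``system of inequalities'' is a \emph{set} of inequalities, so forming the \emph{union} of the per-polynomial systems produces a system whose solution set is the \emph{intersection} of the individual solution cones --- which is the sense in which $C\left(\sigma,F\right)$ is ``described using a union of such systems, one for each $f\in F$.''

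The one step that is not pure bookkeeping is the faithfulness invoked in the second paragraph --- that when one need only control the leading terms of finitely many polynomials, nothing is lost in passing between admissible orderings (which in general require a matrix) and single weight vectors with positive entries. This is supplied by the two facts recorded in Section~\ref{sub: dynamic algorithm}: every positive weight vector extends to an admissible ordering, and any finite set of terms is ordered by a positive weight vector. Granting these, the proposition follows by tracing the definitions, as above.
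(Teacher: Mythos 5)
Your argument is correct, and there is nothing in the paper to compare it against: the proposition is stated as an import from Caboara's paper (Propositions 1.5 and 2.3 of the cited reference) and is not proved here at all --- the text surrounding it only sets up the system $\lp\left(y,t,r\right)$ and then illustrates the claim on Cyclic-4. Your decomposition is exactly the intended content: $C\left(\sigma,F\right)=\bigcap_{f\in F}C_{f}$ by definition of the equivalence class, each $C_{f}$ is cut out by $\lp\left(y,\lt[\sigma]\left(f\right),f\right)$, and the union of constraint \emph{sets} corresponds to the intersection of feasible cones --- which is precisely how the paper later defines $\lp\left(\sigma,G\right)$ as $\bigcup_{g\in G}\lp\left(y,\lt[\sigma]\left(g\right),g\right)$. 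The only step worth tightening is your converse in the single-polynomial case: the matrix representation of $\tau$ does not by itself yield a \emph{strictly positive} weight vector (the first row of the lex matrix, for instance, has zero entries), so ``its matrix representation shows'' is not quite the right justification. What you actually need is the standard finite-realizability fact --- for any admissible ordering and any finite set of terms there is a vector in $\mathbb{N}^{n}$ with positive entries inducing the same comparisons, obtainable by perturbing the first row with small multiples of the later rows --- and you do correctly fall back on this in your final paragraph, where you invoke the two facts the paper records in Section~\ref{sub: dynamic algorithm}. With that citation doing the work rather than the appeal to the matrix itself, the proof is complete.
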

\begin{example}
In Cyclic-4, the choice of leading terms
\[
\left\{ x_{1},x_{1}x_{2},x_{1}x_{2}x_{3},x_{1}x_{2}x_{3}x_{4}\right\} 
\]
can be described by the system of linear inequalities
\begin{align*}
\left\{ y_{i}>0\right\} _{i=1}^{4} & \cup\left\{ y_{1}-y_{i}>0\right\} _{i=2}^{4}\\
 & \cup\left\{ y_{1}-y_{3}>0,y_{1}+y_{2}-y_{3}-y_{4}>0,y_{2}-y_{4}>0\right\} \\
 & \cup\left\{ y_{1}-y_{4}>0,y_{2}-y_{4}>0,y_{3}-y_{4}>0\right\} \\
 & \cup\left\{ y_{1}+y_{2}+y_{3}+y_{4}-0>0\right\} .
\end{align*}

\end{example}
The last inequality comes from the constraint $x_{1}x_{2}x_{3}x_{4}>1$,
and is useless. This illustrates an obvious optimization; if $u\mid t$
and $u\neq t$, the ordering's compatibility with division implies
that $u$ cannot be a leading term of $r$; the corresponding linear
inequality is trivial, and can be ignored. This leads to the first
of two criteria to eliminate terms that are not potential leading
terms.
\begin{prop}[The Divisibility Criterion; Proposition 2.5 and Corollary of \cite{CaboaraDynAlg}]
\label{prop: caboara first optimization}Let $r\in R$ and $t,u\in\supp\left(r\right)$.
If $u$ divides $t$ properly, then $t>_{\tau}u$ for every $\tau\in\allorders$.
In other words, $u$ is not a potential leading term of $r$.
\end{prop}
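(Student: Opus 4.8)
The plan is to argue directly from the definition of admissibility, namely compatibility with divisibility. Suppose $r\in R$ and $t,u\in\supp(r)$ with $u\mid t$ and $u\neq t$. Then $u$ divides $t$ properly in the sense defined for terms, so by the requirement that an admissible ordering be compatible with divisibility, we have $u<_\tau t$ for \emph{every} admissible ordering $\tau\in\allorders$. This is essentially immediate once the definitions are unwound, so there is very little to do.

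The only remaining point is the second sentence: concluding that $u$ is not a potential leading term of $r$. Here I would invoke the definition of PLT: $u$ is a potential leading term of $r$ exactly when there exists some admissible $\sigma$ with $\lt[\sigma](r)=u$, which would require $u\geq_\sigma v$ for all $v\in\supp(r)$, in particular $u\geq_\sigma t$. But we have just shown $u<_\tau t$ for every $\tau$, including this hypothetical $\sigma$, a contradiction. Hence no such $\sigma$ exists and $u$ is not a PLT of $r$.

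I expect no genuine obstacle here; the statement is a direct corollary of the axioms, and the ``proof'' is really just a careful citation of the definition of admissible ordering together with the definition of potential leading term. The one thing to be careful about is the phrase ``divides properly'': I would make explicit that $u\mid t$ together with $u\neq t$ is exactly the hypothesis ``$t\mid u$ and $t\neq u$'' fails in the reverse direction, i.e.\ we are in the case covered by compatibility with divisibility as stated in Section~\ref{sub: Grbner bases} (reading it with the roles of the two terms as $u$ and $t$). No case analysis, induction, or computation is needed.
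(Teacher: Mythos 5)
Your proof is correct and is exactly the intended argument: the paper itself states this proposition without proof (citing Caboara), but the surrounding text makes clear the justification is precisely the compatibility-with-divisibility clause in the definition of an admissible ordering, which is what you invoke. Your care in noting that ``$u$ divides $t$ properly'' instantiates that clause with the roles of the two terms swapped, and your short contradiction showing $u$ cannot be $\lt[\sigma]\left(r\right)$ for any $\sigma$, are both sound; nothing further is needed.
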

Caboara also proposed a second criterion based on the notion of \emph{refining}
the order. If $\sigma$ and $\tau$ are such that $C\left(\tau,G\right)\subseteq C\left(\sigma,G\right)$,
then we say that $\tau$ \textbf{refines} $\sigma$, or that $\tau$
\textbf{refines the order}. Caboara's implementation chooses an ordering
$\tau$ in line~\ref{enu: Select a new order} so that $\lt[\tau]\left(g\right)=\lt[\sigma]\left(g\right)$
for all $g\in G\backslash\left\{ r\right\} $, so that $\tau$ refines
$\sigma$. This allows the algorithm to discard line~\ref{enu: modify P after new order}
altogether, and this is probably a good idea in general.
\begin{prop}[The Refining Criterion; Proposition 2.6 of \cite{CaboaraDynAlg}]
\label{prop: caboara second optimization}Let $G=\left\{ g_{1},\ldots,g_{\ell}\right\} \subsetneq R$
and $t_{1},\ldots,t_{\ell}$ be potential leading terms of $g_{1},\ldots,g_{\ell}$,
respectively. The system
\[
\left\{ t_{k}>u:u\in\supp\left(g_{k}\right)\right\} _{k=1}^{\ell}
\]
is equivalent to the system
\[
\left\{ t_{k}>u:u\mbox{ a PLT of }g_{k}\mbox{ consistent w/}t_{j}=\lt\left(g_{j}\right)\;\forall j=1,\ldots,k-1\right\} _{k=1}^{\ell}.
\]

\end{prop}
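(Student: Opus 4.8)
The plan is to show that the two systems have the same solution set (inside the positive orthant, where weight vectors live). One inclusion is immediate: every inequality $t_k>u$ appearing in the second system has $u$ a potential leading term of $g_k$, hence $u\in\supp(g_k)$, so that inequality appears in the first system as well; thus every solution of the first system solves the second.

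For the converse---the substantive direction---let $y$ be a strictly positive weight vector satisfying the second system, and prove by induction on $k$ that $y$ makes $t_k$ the \emph{strict} leading term of $g_k$, i.e.\ $\langle y,\mathbf t_k\rangle>\langle y,\mathbf u\rangle$ for every $u\in\supp(g_k)\setminus\{t_k\}$; this is exactly the $k$th block of the first system. Assume the statement for all $j<k$. Put $m=\max\{\langle y,\mathbf v\rangle:v\in\supp(g_k)\}$ and let $F$ be the face of the Newton polytope $\mathrm{conv}(\supp(g_k))$ on which $\langle y,\cdot\rangle$ attains $m$. Since $t_k\in\supp(g_k)$ we have $\langle y,\mathbf t_k\rangle\le m$, so it suffices to show that $F=\{\mathbf t_k\}$.

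The crucial step is that every vertex $v$ of $F$ is a potential leading term of $g_k$ consistent with $t_j=\lt(g_j)$ for $j=1,\dots,k-1$. Indeed, a vertex of $F$ is a vertex of the Newton polytope, hence belongs to $\supp(g_k)$ and has full-dimensional normal cone; and since $v\in F$, the vector $y$ lies in that normal cone. Perturbing $y$ slightly into the interior of the normal cone of $v$ yields a still strictly positive weight vector $y'$ for which $v$ is the unique $y'$-maximal term of $g_k$, while---since making $t_j$ the strict leading term of $g_j$ is an open condition that holds for $y$ by the inductive hypothesis---it continues to hold for $y'$ once the perturbation is small enough. Extending $y'$ to an admissible ordering by appending $n-1$ independent rows then witnesses the claim. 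Now the second system forces $F=\{\mathbf t_k\}$: for a vertex $v\neq t_k$ of $F$ it would demand $\langle y,\mathbf t_k\rangle>\langle y,\mathbf v\rangle=m\ge\langle y,\mathbf t_k\rangle$, which is impossible; so $t_k$ is the only vertex of the polytope $F$, whence $F=\{\mathbf t_k\}$, completing the inductive step. The base case $k=1$ is the same argument with the inductive hypothesis empty.

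I expect the main obstacle to lie in making the perturbation precise: one must verify that the normal cone of a vertex $v$ of $F$ is genuinely full-dimensional---which holds for any vertex of any polytope, including the lower-dimensional Newton polytopes that occur when $g_k$ is homogeneous---and that the strictly positive $y$ can be moved into the interior of that cone without leaving the positive orthant and without violating the finitely many strict inequalities inherited from the inductive hypothesis. Here the combinatorics of normal fans and the openness of systems of strict linear inequalities carry the argument; the remainder is routine bookkeeping.
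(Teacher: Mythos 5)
The paper does not actually prove this proposition --- it is quoted from Proposition~2.6 of \cite{CaboaraDynAlg} and used as a black box --- so there is no in-paper argument to compare yours against. On its own terms, your proof is correct and complete. The easy containment is right (the second system is a sub-system of the first, so solutions of the first automatically solve the second), and the substantive direction is handled by a sound induction: the key step, that every vertex of the $y$-maximal face $F$ of the Newton polytope of $g_k$ is a compatible leading term, is exactly what is needed to conclude that the surviving constraints of the second system already pin $F$ down to $\left\{ \mathbf{t}_{k}\right\} $. The worries you raise at the end are indeed the only delicate points, and both resolve favorably: the normal cone of a vertex of a polytope in $\mathbb{R}^{n}$ has dimension $n$ even when the polytope itself is lower-dimensional (as for homogeneous $g_k$), so its interior is nonempty and meets every neighborhood of $y$; and positivity of $y$ together with the finitely many strict inequalities inherited from the inductive hypothesis are open conditions, so a sufficiently small perturbation into that interior preserves them, after which the standard extension of a positive weight vector to an admissible matrix ordering witnesses compatibility. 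One cosmetic remark: as literally written the first system includes the vacuous comparison $t_{k}>t_{k}$; you silently (and correctly) read $u\in\supp\left(g_{k}\right)\backslash\left\{ t_{k}\right\} $, which is clearly the intended meaning.
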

Proposition~\ref{prop: caboara second optimization} implies that
we need only compare $t_{k}$ with other potential leading terms $u$
of $g_{k}$ that are consistent with the previous choices; we will
call such $u$, \textbf{compatible leading terms}.%
\footnote{This notion is essentially Caboara's notion of a potential leading
term with respect to $F$, $\lt\left(F\right)$. Our choice of different
vocabulary is allows us to emphasize that a ``potential'' leading
term for \emph{one} polynomial is not usually ``compatible'' with
previous choices.%
} From a practical point of view, refining the cone in this way is
a good idea, as the technique of refining the order allows one to
warm-start the simplex algorithm from a previous solution using the
dual simplex algorithm, lessening the overhead of linear programming.
It does require some record-keeping; namely, retaining and expanding
the linear program as we add new polynomials. This motivates the definition
of
\[
\lp\left(\sigma,G\right)=\lp\left(\sigma,\left\{ \left(\lt[\sigma]\left(g\right),g\right):g\in G\right\} \right):=\bigcup_{g\in G}\lp\left(y,\lt[\sigma]\left(g\right),g\right).
\]
This burden on space is hardly unreasonable, however, as these systems
would have to be computed even if we allow the order to change. That
approach would entail a combinatorial explosion.
\begin{example}
Let $F$ be the Cyclic-4 system. Suppose that, in the dynamic algorithm,
we add $\spoly\left(f_{1},0\right)=f_{1}$ to $G$, selecting $x_{1}$
for the leading term, with $\sigma=\left(2,1,1,1\right)$. Suppose
that next we select $\spoly\left(f_{2},0\right)=f_{2}$, reduce it
modulo $G$ to $r_{2}=x_{2}^{2}-2x_{2}x_{4}-x_{4}^{2}$, and select
$x_{2}^{2}$ as its leading term, with $\sigma=\left(3,2,1,1\right)$.
We have
\begin{align*}
\lp\left(\sigma,\left\{ \left(x_{1},f_{1}\right),\left(x_{2}^{2},f_{2}\right)\right\} \right) & =\phantom{\cup}\left\{ y_{k}>0\right\} _{k=1}^{4}\cup\left\{ y_{1}-y_{k}>0\right\} _{k=2}^{4}\\
 & \phantom{=\;}\cup\left\{ y_{2}-y_{4}>0,2y_{2}-2y_{4}>0\right\} .
\end{align*}
\end{example}
\begin{rem}
In a practical implementation, it is important to avoid redundant
constraints; otherwise, the programs quickly grow unwieldy. One way
to avoid redundant constraints is to put them into a canonical form
that allows us to avoid adding scalar multiples of known constraints.
Unfortunately, even this grows unwieldy before too long; one of this
paper's main points is to describe a method of minimizing the number
of required constraints.
\end{rem}

\subsubsection{When should we select the ordering?}

As noted in the introduction, Caboara's implementation refines the
ordering for a while, then permanently switches the refiner off. Making
this activation/deactivation mechanism more flexible is the major
goal of this investigation.

\subsubsection{Does the algorithm actually terminate?}

Termination is easy to see if you just refine orderings. In the more
general case, termination has been proved by Golubitsky~\cite{GolubitskyDynamicGB}.
He has found systems where it is advantageous to change the ordering,
rather than limit oneself to refinement.

\subsection{\label{sub: geometric view of dynamic algorithm}The geometric point
of view}

Here, we provide a visual interpretation of how the property $C\left(\tau,G\right)\subsetneq C\left(\sigma,G\right)$
affects the algorithm. This discussion was originally inspired by~\cite{GS93},
but we state it here in terms that are closer to the notion of a Gr\"obner
fan~\cite{MR88}.

Any feasible solution to the system of linear inequalities corresponds
to a half-line in the positive orthant. Thus, the set of all solutions
to any given system forms an open, convex set that resembles an infinite
cone. Adding polynomials to $G$ sometimes splits some of the cones.
(See Figure~\ref{fig: cones narrow as we add polys}). This gives
a geometric justification for describing Caboara's approach as a \textbf{narrowing
cone algorithm}.
\begin{figure}
\begin{centering}
\includegraphics[scale=0.2]{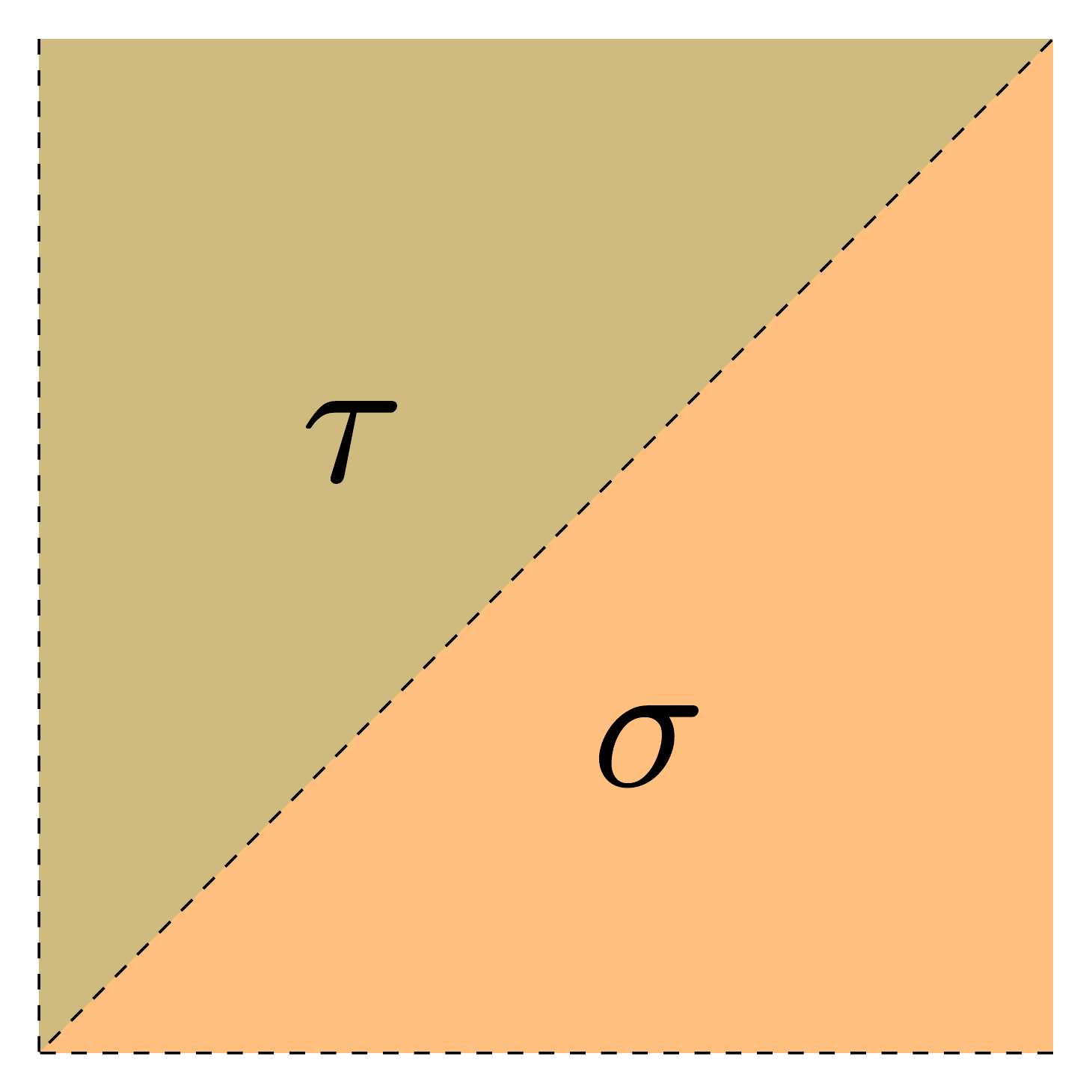}\quad\includegraphics[scale=0.2]{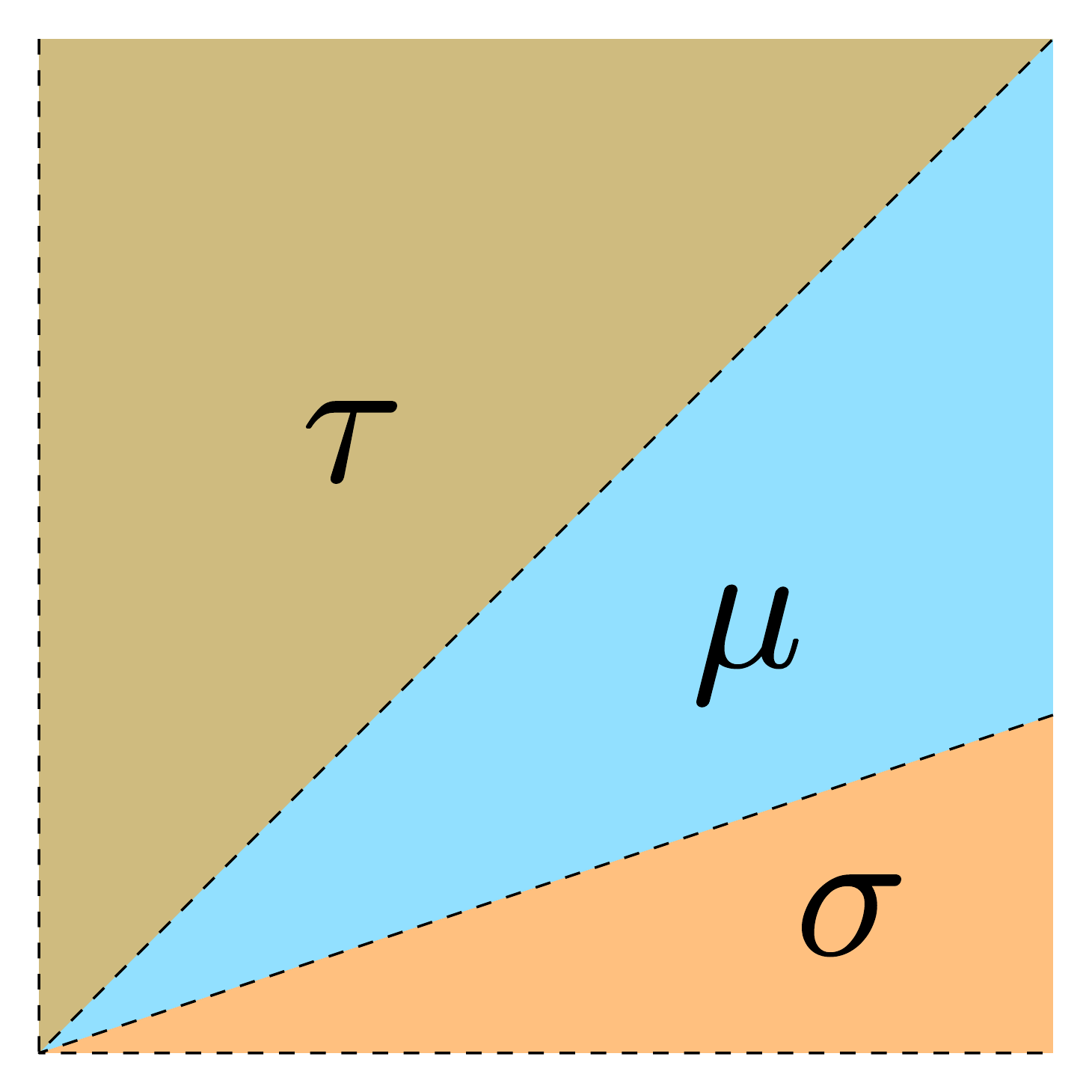}
\par\end{centering}

\caption{\label{fig: normal cones of newton polytope}\label{fig: cones narrow as we add polys}The
cones associated to a basis $G$ narrow as we add polynomials to the
basis. Here, $G=\left\{ x^{2}+y^{2}-4,xy-1\right\} $; the two possible
orderings are $\tau$ and $\sigma$, which give $\lt[\sigma]\left(G\right)=\left\{ x^{2},xy\right\} $
and $\lt[\tau]\left(G\right)=\left\{ y^{2},xy\right\} $. Suppose
we choose $\sigma$ and compute the $S$-poly\-nomial. This adds
$r=y^{3}+x-4y$ to the basis, and the cone containing $\sigma$ splits,
giving us two choices for $\lt\left(r\right)$. }
\end{figure}

Even though \emph{some} cones \emph{can} split when we add new polynomials,
not \emph{all} clones \emph{must} split. In particular, the cone containing
the desired ordering need not split, especially when the algorithm
is nearly complete. For this, reason, it is not necessary to refine
the cone every time a polynomial is added. The methods of the next
section help detect this situation.

\section{\label{sec: ideas}Exploiting the narrowing cone}

The main contribution of this paper is to use the narrowing cone to
switch the refiner on and off. We propose two techniques to accomplish
this: one keeps track of cones known to be disjoint (Section \ref{sub: disjoint cones});
the other keeps track of ``boundary vectors'' that prevent the refiner
from leaving the cone (Section \ref{sub: corner vectors}).

\subsection{\label{sub: disjoint cones}Disjoint Cones}

The Disjoint Cones Criterion is based on the simple premise that if
we track inconsistent constraints of linear programs, we can avoid
expanding the program later.

\subsubsection{Geometric motivation}

Let
\begin{itemize}
\item $C_{T}$ be the cone defined by the selection of $T=\left\{ t_{1},\ldots,t_{\ell}\right\} $
as the leading terms of $G=\left\{ g_{1},\ldots,g_{\ell}\right\} $,
\item $C_{u}$ be the cone defined by the selection of $u$ as the leading
term of $g_{\ell+1}$, and
\item $C_{T'}$ be the cone defined by the selection of $T'=\left\{ t_{1},\ldots,t_{\ell+k}\right\} $
as the leading terms of $G'=\left\{ g_{1},\ldots,g_{\ell+k}\right\} $.
\end{itemize}
Suppose the current ordering is $\sigma\in C_{T}$, and $C_{T}\cap C_{u}=\emptyset$.
It is impossible to refine the current ordering in a way that selects
$u$ as the leading term of $g_{\ell+1}$, as this would be inconsistent
with previous choices. On the other hand, if $\tau\in C_{T'}$ and
$C_{T'}\subseteq C_{T}$, then it is possible to refine $\sigma$
to an ordering $\tau$.

Now let
\begin{itemize}
\item $C_{v}$ be the cone defined by the selection of $v$ as the leading
term for $g_{\ell+k+1}$.
\end{itemize}
If $C_{v}\subseteq C_{u}$, then $C_{v}\cap C_{T'}\subseteq C_{u}\cap C_{T}=\emptyset$,
so we cannot refine $\sigma$ to any ordering that selects $v$ as
the leading term of $g_{\ell+k+1}$. Is there some way to ensure that
the algorithm does not waste time solving such systems of linear inequalities?
Yes! If we record the cone $C_{u}$, we can check whether $C_{v}\subseteq C_{u}$,
rather than going to the expense of building a linear program to check
whether $C_{v}\cap C_{T'}\neq\emptyset$.

\subsubsection{Algebraic implementation}

We could determine whether $C_{v}\subseteq C_{u}$ by building a linear
program, but we will content ourselves with determining when one set
of inconsistent linear constraints is a subset of another set.
\begin{thm}[Disjoint Cones Criterion]
Let $L_{T}$, $L_{U}$, and $L_{V}$ be sets of linear constraints.
If $L_{T}$ is inconsistent with $L_{U}$ and $L_{U}\subseteq L_{V}$,
then $L_{T}$ is also inconsistent with $L_{V}$.\end{thm}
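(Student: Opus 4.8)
The statement is essentially a set-theoretic triviality about consistency of systems of linear constraints, so the plan is to unwind the definitions and chase the inclusion.

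First I would recall what ``$L_T$ is inconsistent with $L_U$'' means: there is no point $y \in \mathbb{R}^n$ (equivalently, no half-line in the positive orthant, given the strict positivity constraints $y_k > 0$) that simultaneously satisfies every constraint in $L_T \cup L_U$. Equivalently, the solution set of $L_T$ and the solution set of $L_U$ have empty intersection. Then ``$L_T$ is inconsistent with $L_V$'' likewise means the solution sets of $L_T$ and $L_V$ are disjoint. So the whole statement reduces to the monotonicity of solution sets under adding constraints.

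The key step is the observation that if $L_U \subseteq L_V$, then any $y$ satisfying all of $L_V$ must in particular satisfy all of $L_U$; hence $\mathrm{Sol}(L_V) \subseteq \mathrm{Sol}(L_U)$. Now suppose for contradiction that $L_T$ is consistent with $L_V$, i.e. there exists $y$ satisfying $L_T \cup L_V$. Since $y$ satisfies $L_V$, by the inclusion it also satisfies $L_U$; therefore $y$ satisfies $L_T \cup L_U$, contradicting the hypothesis that $L_T$ is inconsistent with $L_U$. Hence no such $y$ exists, and $L_T$ is inconsistent with $L_V$.

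Honestly there is no real obstacle here; the ``hard part'' is only making sure the statement is phrased at the right level of generality so that it applies in the intended setting (the constraints $L_U$ coming from $\mathrm{lp}(y,u,g_{\ell+1})$ describing $C_u$, and $L_V \supseteq L_U$ coming from $\mathrm{lp}(y,v,g_{\ell+k+1})$ together with the inequalities forcing $v$ to dominate $u$, so that $C_v \subseteq C_u$). I would therefore keep the proof to two or three sentences and perhaps append a remark noting that the converse fails — a larger constraint set can certainly be inconsistent with $L_T$ without the smaller one being so — which is exactly why the criterion only lets us rule cones \emph{out}, never in.
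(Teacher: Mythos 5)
Your proof is correct and is essentially the same as the paper's: both reduce to the monotonicity observation that $L_U\subseteq L_V$ forces the feasible region of $L_V$ to sit inside that of $L_U$, so disjointness with the feasible region of $L_T$ propagates. (The paper phrases this as $C_V\cap C_T\subseteq C_U\cap C_T=\emptyset$, which is precisely your contrapositive argument.)
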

\begin{proof}
Assume $L_{T}$ is inconsistent with $L_{U}$ and $L_{U}\subseteq L_{V}$.
The first hypothesis implies that $L_{T}\cap L_{U}=\emptyset$. The
second implies that $L_{V}$ has at least as many constraints as $L_{U}$,
so that the feasible regions $C_{U}$ and $C_{V}$, corresponding
to $L_{U}$ and $L_{V}$, respectively, satisfy the relation $C_{U}\supseteq C_{V}$.
Putting it all together, $C_{V}\cap C_{T}\subseteq C_{U}\cap C_{T}=\emptyset$.
\end{proof}
In our situation, $L_{U}$ and $L_{V}$ correspond to different choices
of leading terms of a new polynomial added to the set, while $L_{T}=\lp\left(\sigma,G\right)$.
We want to consider both the case where $L_{V}$ is a set of \emph{new}
constraints, and the case where $L_{V}$ is some extension of $L_{T}$.
Rather than discard the inconsistent linear program $L_{U}$, we will
retain it and test it against subsequent sets of constraints, avoiding
pointless invocations of the simplex algorithm.

We implement the geometric idea using a global variable, $\rejects$.
This is a set of sets; whenever $L_{T}$ is known to be consistent,
but the simplex algorithm finds $L_{T}\cup L_{U}$ inconsistent, we
add $L_{U}$ to\emph{ $\mathit{rejects}$}. Subsequently, while creating
the constraints in an extension $L_{V}$ of $L_{T}$, we check whether
$L_{U}$ is contained in either $L_{V}$ or $L_{T}\cup L_{V}$; if
so, we reject $L_{V}$ out of hand, without incurring the burden of
the simplex algorithm.

Again, we are not checking whether the cones are disjoint, only the
necessary condition of whether the linear constraints are a subset.
With appropriate data structures, the complexity of determining subset
membership is relatively small; with hashed sets, for example, the
worst-case time complexity would be the cost of the hash function
plus $O\left(\left|\mathit{rejects}\right|\right)$.

\subsection{\label{sub: corner vectors}Boundary Vectors}

Unlike the Disjoint Cones Criterion, the Boundary Vectors Criterion
can prevent the construction of \emph{any }constraints.

\subsubsection{Geometric motivation}

Let $C\left(G,\sigma\right)$ be a cone associated with $G$, containing
the ordering $\sigma$.
\begin{defn}
The \textbf{closure} of $C\left(G,\sigma\right)$ is the feasible
region obtained by rewriting $\lp\left(\sigma,G\right)$ as inclusive
inequalities ($\geq$ in place of $>$). Let $d\in\mathbb{R}$ be
positive. We say that $\omega\in\mathbb{R}^{n}$ is a \textbf{boundary
vector} of $C\left(G,\sigma\right)$ if it is an extreme point of
the intersection of the closure of $C\left(G,\sigma\right)$ and the
additional constraint $\sum_{k=1}^{n}x_{k}=d$.

For a fixed $d$, we denote the set of all boundary vectors of $C\left(G,\sigma\right)$
by $\Omega_{G,\sigma,d}$. When the value of $d$ is not critical
to the discussion, we simply write $\Omega_{G,\sigma}$. Likewise,
if the values of $G$ and $\sigma$ are not critical to the discussion,
or if they are understood from context, we simply write $\Omega$.\end{defn}
\begin{example}
\label{exa: corner vectors}Suppose $\sigma=\left(18,5,7\right)$
and $C\left(G,\sigma\right)$ is defined by
\[
\left\{ \begin{array}{rl}
2y_{1}-y_{2} & >0\\
-y_{1}+4y_{2} & >0\\
y_{1}+y_{2}-3y_{3} & >0\\
-y_{2}+y_{3} & >0
\end{array}\right..
\]
When $d=30$, we have $\Omega=\left\{ \left(15,7.5,7.5\right),\left(20,5,5\right),\left(18,4.5,7.5\right)\right\} $.
Not all intersections of constraints are boundary vectors; one intersection
that does not border the feasible region is $\left(22.5,0,7.5\right)$,
which satisfies all but the third constraint.
\end{example}
Boundary vectors possess the very desirable property of capturing
all possible refinements of the term ordering.
\begin{thm}[Boundary Vectors Criterion]
\label{thm: can consider only boundary vectors}Let $r\in R$ and
$\Omega$ the set of boundary vectors of $C\left(\sigma,G\right)$.
Write $t=\lt[\sigma]\left(r\right)$. If there exists $\tau\in C\left(\sigma,G\right)$
such that $\lt[\tau]\left(r\right)=u\neq t$ --- that is, there exists
$\tau$ that refines the order differently from $\sigma$ --- then
there exists $\omega\in\Omega$ such that $\omega\left(\mathbf{u}-\mathbf{t}\right)>\mathbf{0}$.
\end{thm}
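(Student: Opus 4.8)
<br>

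The plan is to exploit the fact that the closure of $C\left(\sigma,G\right)$ intersected with the hyperplane $\sum_k x_k = d$ is a (bounded) polytope $\mathcal{P}$, whose vertices are precisely the boundary vectors in $\Omega$. The linear functional $\ell\left(\mathbf{x}\right) = \mathbf{x}\cdot\left(\mathbf{u}-\mathbf{t}\right)$ attains its maximum over $\mathcal{P}$ at a vertex, so it suffices to show that $\ell$ is \emph{positive} somewhere on $\mathcal{P}$; then its maximum, attained at some $\omega\in\Omega$, is also positive, giving $\omega\left(\mathbf{u}-\mathbf{t}\right)>0$ as required. So the whole theorem reduces to the claim: \emph{if some $\tau\in C\left(\sigma,G\right)$ has $\lt[\tau]\left(r\right)=u$, then $\ell$ takes a positive value on $\mathcal{P}$.}

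To prove that claim, first I would recall that $\tau\in C\left(\sigma,G\right)$ means $\tau$, viewed as a weight vector (or the first row of its matrix, suitably chosen), satisfies all the strict inequalities defining $\lp\left(\sigma,G\right)$, hence lies in the open cone $C$ whose closure-slice is $\mathcal{P}$; and $\lt[\tau]\left(r\right)=u$ means $\tau\cdot\left(\mathbf{u}-\mathbf{w}\right)>0$ for every other term $w$ of $r$, in particular $\tau\cdot\left(\mathbf{u}-\mathbf{t}\right)>0$. Since $C$ is a cone and $\sum_k x_k = d$ meets its interior, we may rescale $\tau$ by a positive scalar so that $\sum_k \tau_k = d$; rescaling preserves the sign of $\tau\cdot\left(\mathbf{u}-\mathbf{t}\right)$, so this rescaled vector lies in the relative interior of $\mathcal{P}$ and has $\ell\left(\tau\right)>0$. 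That establishes the claim, and the theorem follows.

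There is one subtlety to handle carefully: an admissible ordering $\tau$ is in general given by a \emph{matrix}, not a single weight vector, so one must justify replacing $\tau$ by a genuine vector $\omega$ lying in the open cone. This is exactly the content of the discussion preceding the statement (``A vector will suffice to determine $\sigma$'') together with the remark that a strictly feasible point of $\lp\left(\sigma,G\right)$ exists whenever the cone is nonempty; I would cite that and the referenced Propositions~1.5 and 2.3 of \cite{CaboaraDynAlg}. A second point worth a sentence is that the boundary vectors, being extreme points of a slice of a rational polyhedral cone, are the vertices of a genuine polytope (the cone's closure is pointed once we intersect with the positive orthant and the fixed-sum hyperplane), so the ``maximum attained at a vertex'' step is legitimate.

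The main obstacle is not any single hard estimate but rather getting the quantifiers and the open-versus-closed distinction exactly right: $\Omega$ consists of points on the \emph{boundary} (where some defining inequality is tight), yet the conclusion asserts a \emph{strict} inequality $\omega\left(\mathbf{u}-\mathbf{t}\right)>0$ at such a boundary point. The resolution — that $\ell$ is strictly positive at an interior point and its maximum over the polytope is therefore strictly positive and attained at a vertex even though that vertex sits on the boundary — is simple once seen, but it is the crux of the argument and deserves to be spelled out rather than waved through.
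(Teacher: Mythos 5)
Your proof is correct and takes essentially the same approach as the paper: both reduce the claim to maximizing the linear functional $y\mapsto y\cdot(\mathbf{u}-\mathbf{t})$ over the slice of the closed cone by a hyperplane $\sum y_k = d$, observe by the Corner Point Theorem that this maximum is attained at an extreme point (a boundary vector), and conclude it is strictly positive because $\tau$ (rescaled to the slice if necessary) already makes the functional positive. The paper simply chooses $d = \sum\tau_k$ so no rescaling is needed, whereas you fix $d$ and rescale $\tau$; the two are equivalent since the sign of $\omega\cdot(\mathbf{u}-\mathbf{t})$ is scale-invariant, and your added remarks on the open-versus-closed distinction and the vector-versus-matrix representation of $\tau$ are correct clarifications rather than departures.
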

Two observations are in order before we prove Theorem~\ref{thm: can consider only boundary vectors}.
First, the converse of Theorem~\ref{thm: can consider only boundary vectors}
is not true in general. For example, let $\omega=\left(2,1\right)$,
$r=x^{2}+x+y$, $v=x^{2}$, $u=x$, and $t=y$. Even though $\omega\left(\mathbf{u}-\mathbf{t}\right)>0$,
the fact that $u\mid v$ implies that no admissible ordering $\tau$
chooses $\lt[\tau]\left(r\right)=u$.

Nevertheless, the theorem does imply a useful corollary:
\begin{cor}
If we know the set $\Omega$ of boundary vectors for $C\left(\sigma,G\right)$,
then we can discard any term $u$ such that $\omega\left(\mathbf{t}-\mathbf{u}\right)>0$
for all $\omega\in\Omega$. That is, $u$ is not a compatible leading
term.
\end{cor}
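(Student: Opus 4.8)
The plan is to read the corollary as the contrapositive of Theorem~\ref{thm: can consider only boundary vectors}, once the phrase ``compatible leading term'' is unwound. Recall from the discussion following Proposition~\ref{prop: caboara second optimization} that $u$ is a compatible leading term of $r$ precisely when $u\in\supp\left(r\right)$ and there is an ordering $\tau$ refining $\sigma$ --- that is, $\tau\in C\left(\sigma,G\right)$ --- with $\lt[\tau]\left(r\right)=u$. So, writing $t=\lt[\sigma]\left(r\right)$ as in the theorem, it suffices to show that under the hypothesis $\omega\left(\mathbf{t}-\mathbf{u}\right)>0$ for every $\omega\in\Omega$, no such $\tau$ can exist.

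First I would dispose of the case $u=t$: then $\mathbf{t}-\mathbf{u}=\mathbf{0}$, so $\omega\left(\mathbf{t}-\mathbf{u}\right)=0$ for every $\omega$, and since $\Omega\neq\emptyset$ --- the closure of $C\left(\sigma,G\right)$ meets the bounded hyperplane $\sum_{k}x_{k}=d$ in a nonempty compact polyhedron, which therefore has an extreme point --- the hypothesis already fails, so this case is vacuous. Hence we may assume $u\neq t$. Now suppose, toward a contradiction, that some $\tau\in C\left(\sigma,G\right)$ satisfies $\lt[\tau]\left(r\right)=u\neq t$. Theorem~\ref{thm: can consider only boundary vectors} then furnishes an $\omega\in\Omega$ with $\omega\left(\mathbf{u}-\mathbf{t}\right)>0$, i.e.\ $\omega\left(\mathbf{t}-\mathbf{u}\right)<0$, which directly contradicts the hypothesis. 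Therefore no such $\tau$ exists, $u$ is not a compatible leading term, and the dynamic algorithm may skip it without ever building the associated linear program.

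I do not anticipate any real obstacle: all of the geometric content has already been packed into Theorem~\ref{thm: can consider only boundary vectors}, and what is left is bookkeeping. The only two points deserving a word of care are (i) observing that $\Omega$ is nonempty, so that ``for all $\omega\in\Omega$'' is not a vacuous quantifier, and (ii) recalling that the theorem is stated only for $u\neq t$, so the case $u=t$ must be handled by hand --- both are immediate. One might additionally remark that the strict inequality in the hypothesis ($>0$ rather than $\geq0$) is exactly what is needed to rule out both cases simultaneously; weakening it to $\geq0$ would not suffice, since $u=t$ would then satisfy it yet remain the chosen leading term.
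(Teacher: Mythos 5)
Your proof is correct and is exactly the contrapositive reading that the paper intends when it says the theorem ``does imply a useful corollary'' without supplying a separate argument. The two points you flag as needing care --- that $\Omega$ is nonempty (the closure of $C(\sigma,G)$ meets the hyperplane $\sum_k y_k = d$ in a nonempty bounded polyhedron for any $d>0$ that the algorithm actually uses, hence has an extreme point), and that the theorem only speaks to $u \neq t$ so the case $u=t$ must be seen to be vacuous --- are genuine if minor, and your observation about why strict inequality in the hypothesis is essential is a nice bonus the paper leaves implicit.
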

We turn now to the proof of Theorem~\ref{thm: can consider only boundary vectors}.
Figure~\ref{fig: cross-section of normal cone} illustrates the intuition;
\begin{figure}
\begin{centering}
\includegraphics[scale=0.4]{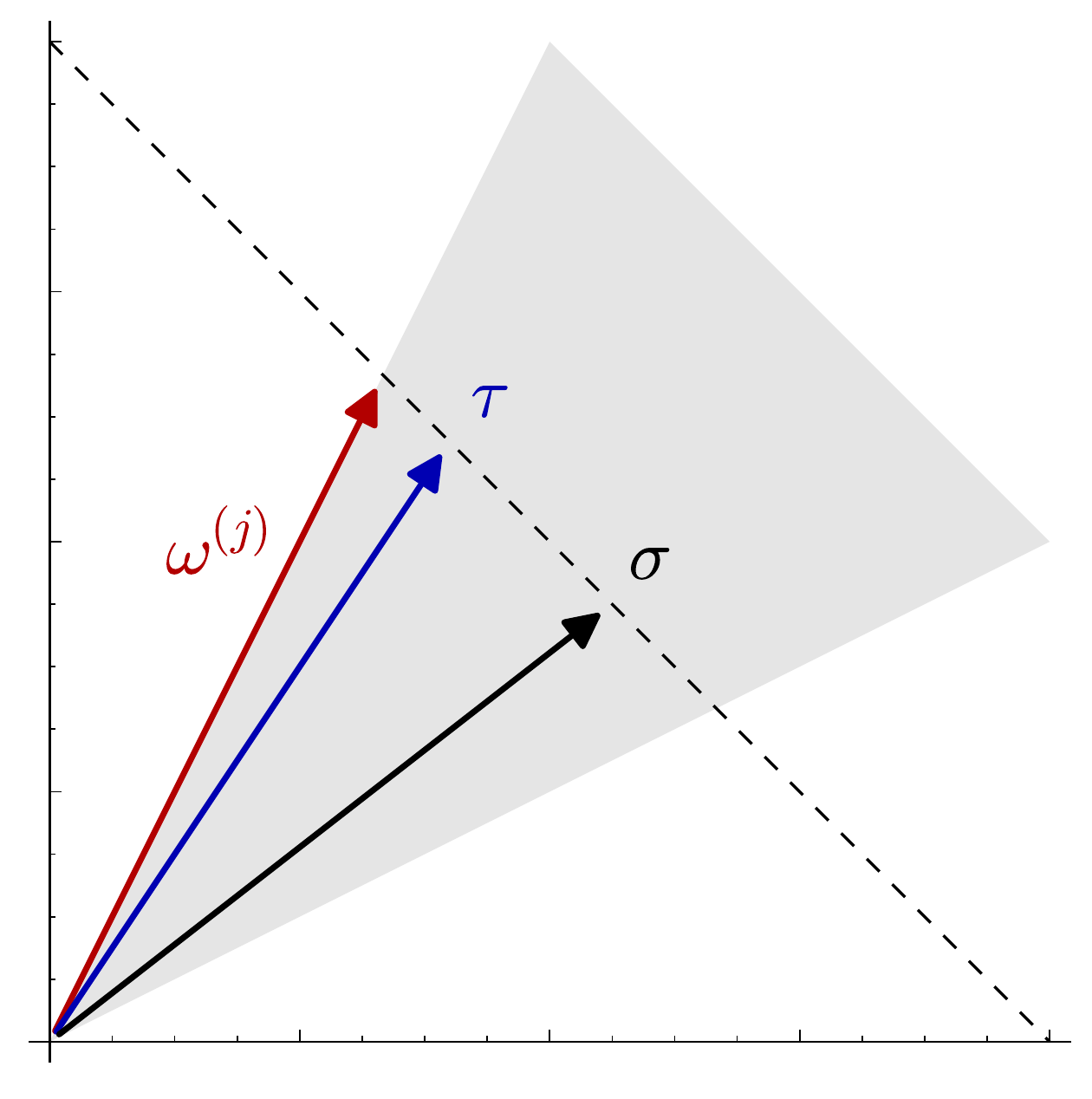}
\par\end{centering}

\caption{\label{fig: cross-section of normal cone}If $\lt[\tau]\left(r\right)\neq\lt[\sigma]\left(r\right)$,
convexity and linearity imply that we can find a boundary vector $\omega$
such that $\omega$ would give $\lt[\tau]\left(r\right)$ more weight
than $\lt[\sigma]\left(r\right)$.}
\end{figure}
 here, $\tau$ and $\sigma$ select different leading terms, and $\omega$
lies on the other side of $\tau$ from $\sigma$. By linearity, $\omega$
gives greater weight to $\lt[\tau]\left(r\right)$ than to $\lt[\sigma]\left(r\right)$.
\begin{proof}
[of Theorem \ref{thm: can consider only boundary vectors}]Suppose
that there exists $\tau\in C\left(\sigma,G\right)$ such that $\lt[\tau]\left(r\right)=u$.
By definition, $\tau\left(\mathbf{u}-\mathbf{t}\right)>0$. Let $d=\sum\tau_{k}$;
if $\tau=\omega$ for some $\omega\in\Omega_{G,\sigma,d}$, then we
are done. Otherwise, consider the linear program defined by maximizing
the objective function $\sum y_{k}\left(u_{k}-t_{k}\right)$ subject
to the closure of $\lp\left(\sigma,G\right)\cup\left\{ \sum y_{k}=\sum\tau_{k}\right\} $.
This is a convex set; the well-known Corner Point Theorem implies
that a maximum of any objective function occurs at an extreme point~\cite{CalvertLinearProgramming}\cite{SultanLinearProgramming}.
By definition, such a point is a boundary vector of $C\left(\sigma,G\right)$.
Let $\omega\in C\left(\sigma,G\right)$ be a boundary vector where
$\sum y_{k}\left(u_{k}-t_{k}\right)$ takes its maximum value; then
$\omega\left(\mathbf{u}-\mathbf{t}\right)\geq\tau\left(\mathbf{u}-\mathbf{t}\right)>0$.
\end{proof}

Computing $\Omega$ can be impractical, as it is potentially exponential
in size. We approximate it instead by computing corner points that
correspond to the maximum and minimum of each variable on a cross
section of the cone with a hyperplane, giving us at most $2n$ points.
Figure~\ref{fig: underestimating Omega}
\begin{figure}
\begin{centering}
\includegraphics[scale=0.13]{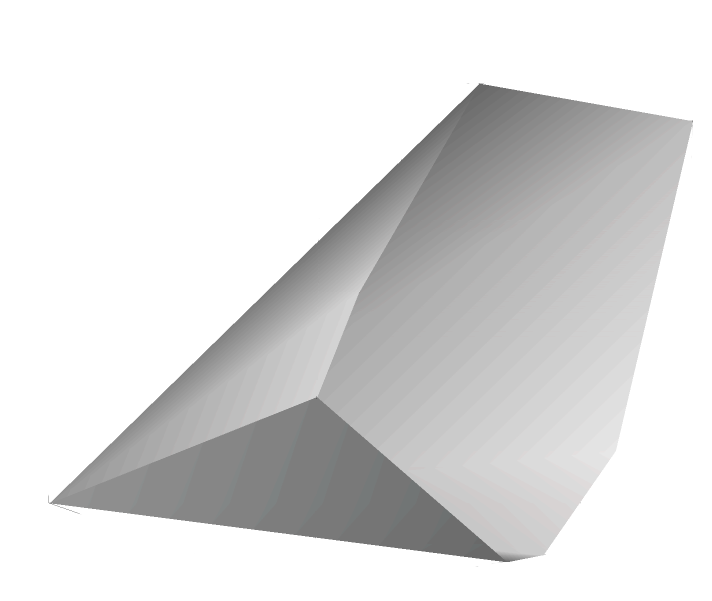}\qquad\includegraphics[scale=0.17]{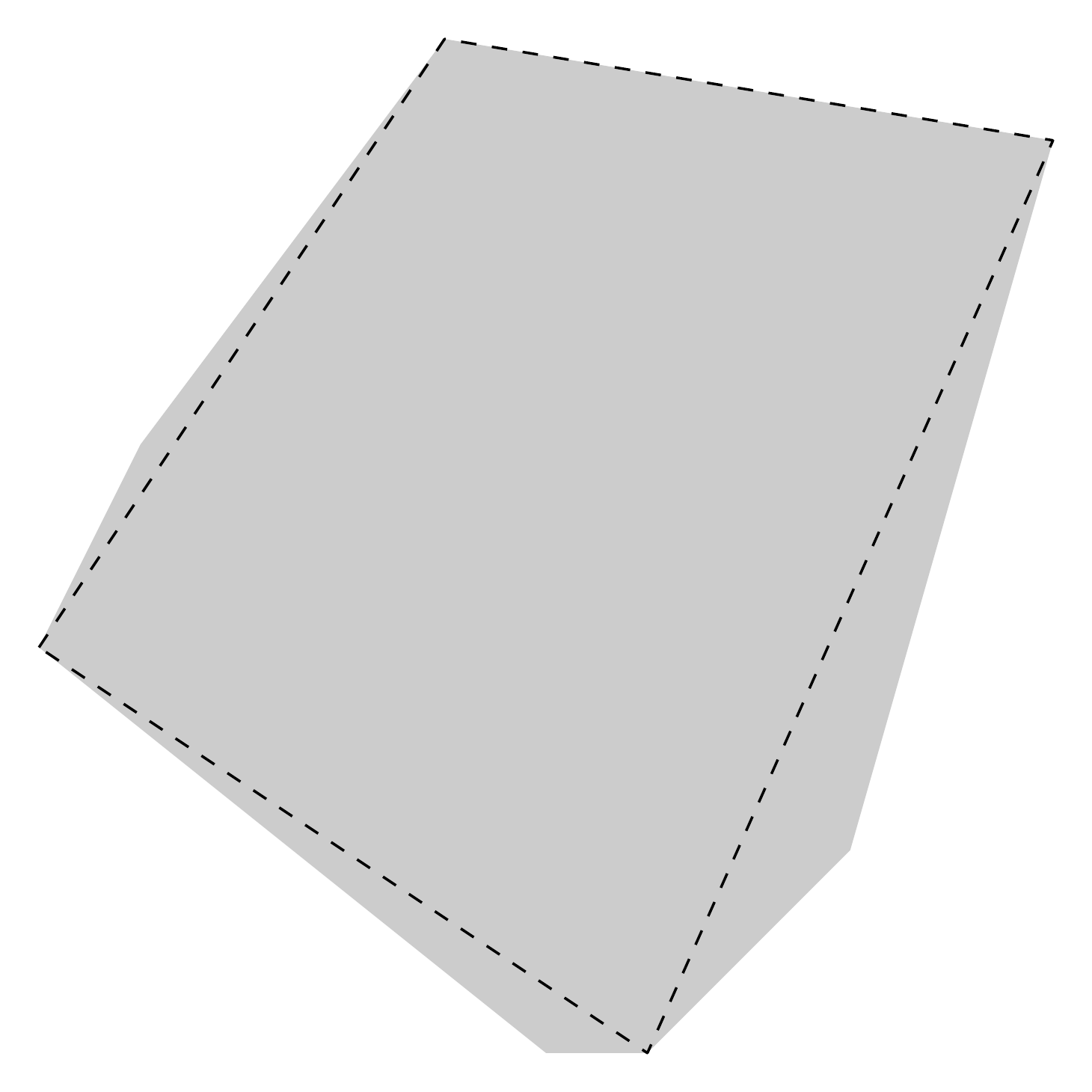}
\par\end{centering}

\caption{\label{fig: underestimating Omega}We approximate $\Omega$ by computing
boundary vectors corresponding to points that maximize and minimize
the value of an objective function. The cone at the left has seven
boundary vectors, as we see in the cross section on the right. In
this case, four vectors maximize and minimize the variables; they
define a ``sub-cone'' whose cross-section corresponds to the dashed
line.}
\end{figure}
 illustrates the idea.
\begin{example}
\label{exa: corner vectors obtained by maximizing, minimizing variables}Continuing
Example~\ref{exa: corner vectors}, maximizing $y_{1}$, $y_{2}$,
and $y_{3}$ gives us the boundary vectors listed in that example.
In this case, these are \emph{all} the boundary vectors, but we are
not always so lucky.
\end{example}
Approximating $\Omega$ has its own disadvantage; inasmuch as some
orderings are excluded, we risk missing some refinements that could
produce systems that we want. We will see that this is not a serious
drawback in practice.

\subsubsection{\label{sub: avoiding changes of lm}Minimizing the number of constraints}

While boundary vectors reduce the \emph{number} of linear programs
computed, the \emph{size} of the linear programs can remain formidable.
After all, we are still adding constraints for every monomial that
passes the Divisibility Criterion. Is there some way to use boundary
vectors to minimize the number of constraints in the program?

We will attempt to add only those constraints that correspond to terms
that the boundary vectors identify as compatible leading terms. As
we are not computing all the boundary vectors, the alert reader may
wonder whether this is safe.
\begin{example}
Suppose
\begin{itemize}
\item $\mu\in C\left(\tau,\left\{ g_{1},\ldots,g_{\ell+k}\right\} \right)\subsetneq C\left(\sigma,\left\{ g_{1},\ldots,g_{\ell}\right\} \right)$,
\item $t=\lt[\sigma]\left(g_{\ell}\right)$, and
\item $u=\lt[\mu]\left(g_{\ell}\right)$.
\end{itemize}
Suppose further that, when $g_{\ell}$ is added to the basis, the
algorithm selects $\sigma$ for the ordering.

For some choices of boundary vectors, the algorithm might not notice
that $\mu\in C\left(\sigma,\left\{ g_{1},\ldots,g_{\ell}\right\} \right)$,
as in Figure~\ref{fig: why we sometimes have to walk the algorithm back}(a).
\begin{figure}
\begin{centering}
\begin{tabular}{ccc}
\includegraphics[scale=0.2]{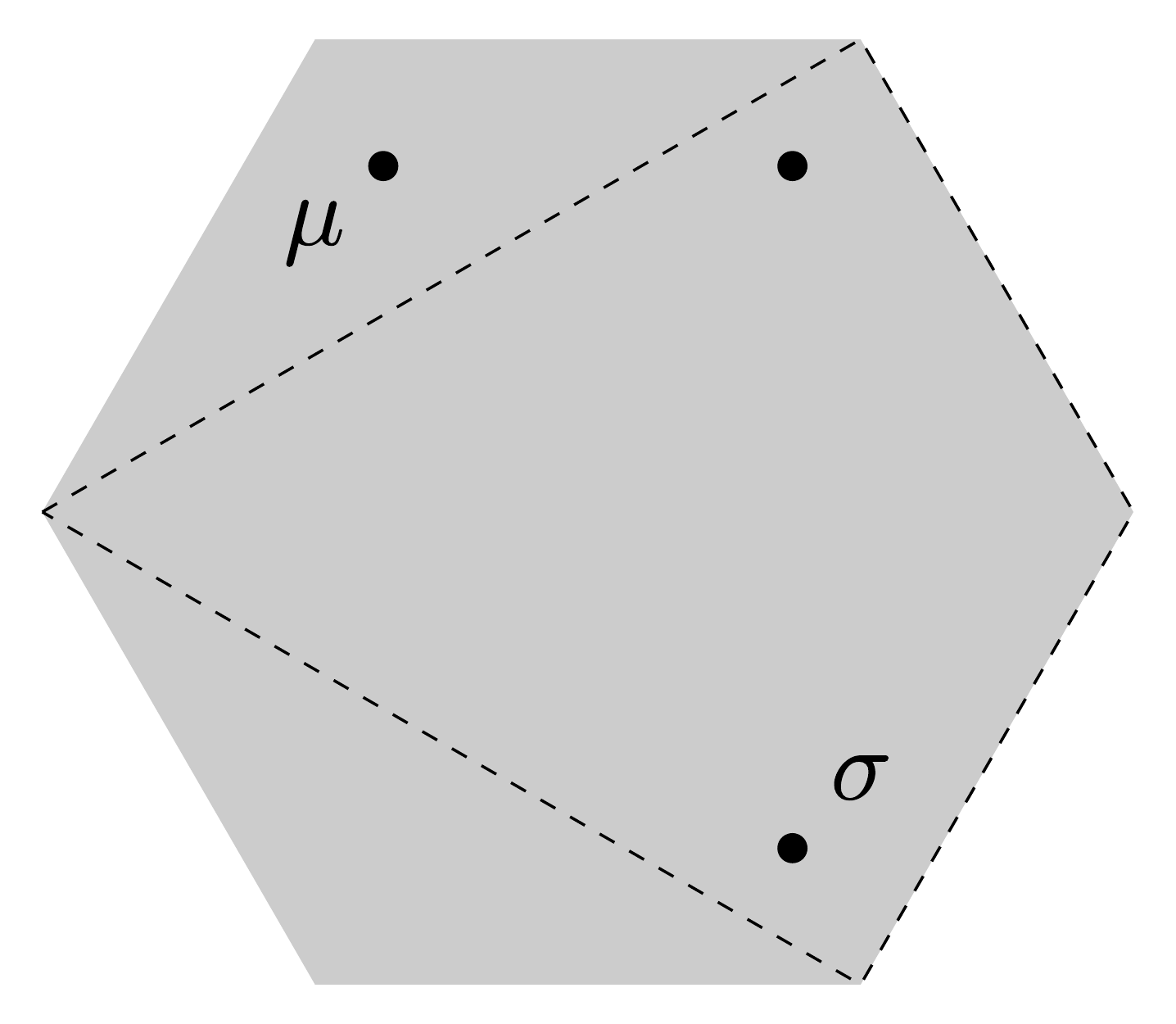} &  & \includegraphics[scale=0.2]{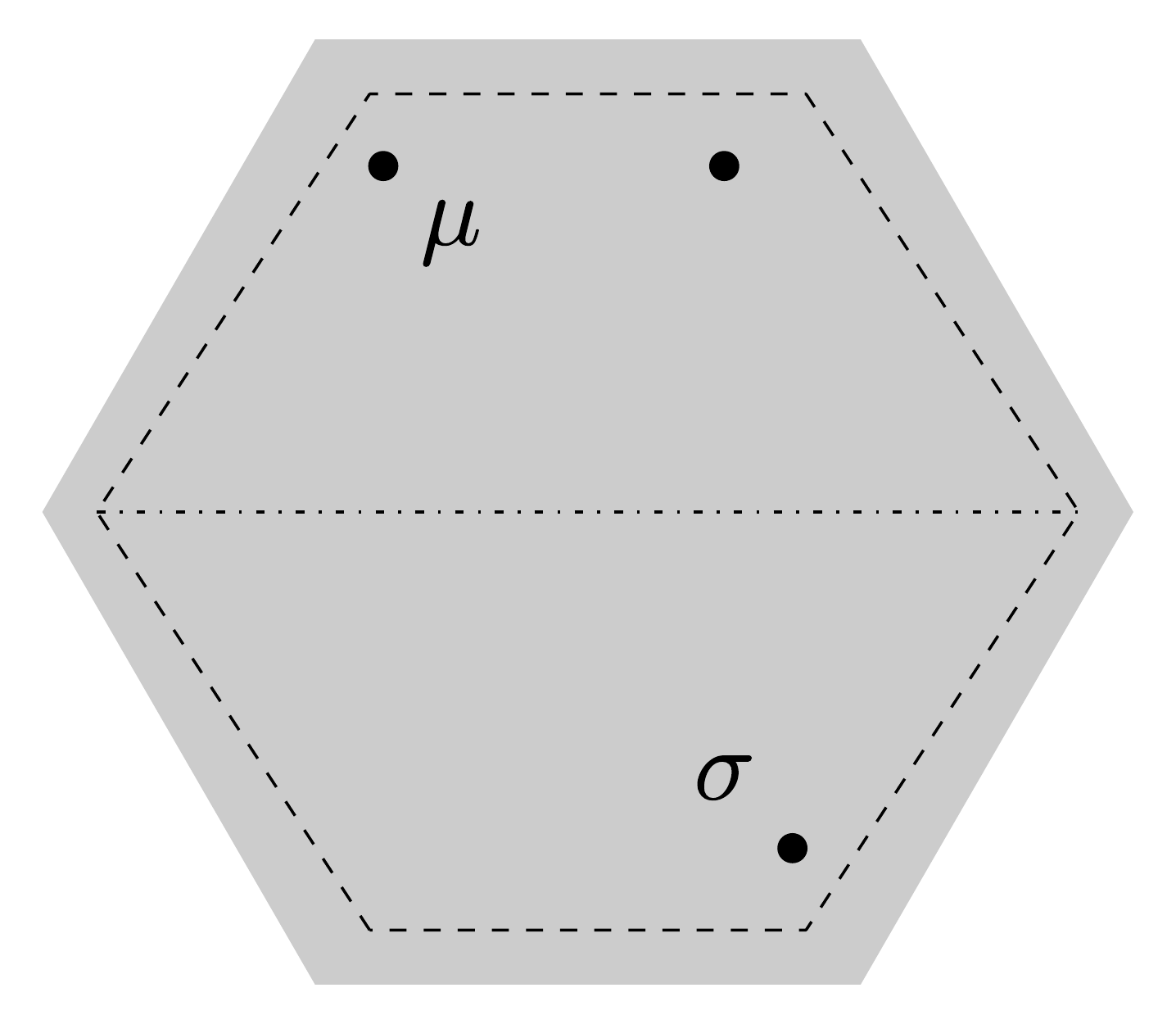}\tabularnewline
(a) &  & (b)\tabularnewline
\end{tabular}
\par\end{centering}

\caption{\label{fig: why we sometimes have to walk the algorithm back}Although
$\sigma$ and $\mu$ lie within the same cone, the choice of border
vectors in (a) could mean the algorithm at first does not add a constraint
to guarantee $\lt[\sigma]\left(g_{\ell}\right)>\lt[\mu]\left(g_{\ell}\right)$.
If the narrowed cone splits later on, as in (b), and the algorithm
moves into a subcone that does not contain $\sigma$, a subsequent
choice of $\mu$ is possible, causing a change in the leading terms.
We can try adding the previously-overlooked constraint, and continue
if we find a feasible solution. The unlabeled dot could represent
such a compromise ordering.}
\end{figure}
 Thus, it would not add the constraint $\left(y_{1},\ldots,y_{n}\right)\cdot\left(\mathbf{t}-\mathbf{u}\right)>0$.
A later choice of boundary vectors \emph{does} recognize that $\mu\in C\left(\tau,\left\{ g_{1},\ldots,g_{\ell+k}\right\} \right)$,
and even selects $\mu$ as the ordering. We can see this in Figure
\ref{fig: why we sometimes have to walk the algorithm back}(b). In
this case, the leading term of $g_{\ell}$ changes from $t$ to $u$;
the ordering has been changed, not refined!
\end{example}
Since the Gr\"obner basis property depends on the value of the leading
terms, this endangers the algorithm's correctness. Fortunately, it
is easy to detect this situation; the algorithm simply monitors the
leading terms.

It is not so easy to remedy the situation once we detect it. One approach
is to add the relevant critical pairs to $P$, and erase any record
of critical pairs computed with $g_{\ell}$, or discarded because
of it. Any practical implementation of the dynamic algorithm would
use criteria to discard useless critical pairs, such as those of Buchberger,
but if the polynomials' leading terms have changed, the criteria no
longer apply. Recovering those pairs would require the addition of
needless overhead.

A second approach avoids these quandaries: simply add the missing
constraint to the system. This allows us to determine whether we were
simply unlucky enough to choose $\sigma$ from a region of $C\left(\sigma,G\right)$
that lies outside $C\left(\mu,G\right)$, or whether really there
is no way to choose both $\lt[\sigma]\left(g_{\ell}\right)$ and $\lt[\mu]\left(g_{\ell+k}\right)$
simultaneously. In the former case, the linear program will become
infeasible, and we reject the choice of $\mu$; in the latter, we
will be able to find $\tau\in C\left(\sigma,G\right)\cap C\left(\mu,G\right)$.

\subsubsection{Implementation}

Since the constraints of $\lp\left(\sigma,G\right)$ consist of integer
polynomials, it is possible to find integer solutions for the boundary
vectors; one simply rescales rational solutions once they are found.
However, working with exact arithmetic can be quite slow, techniques
of integer programming are \emph{very} slow, and for our purposes,
floating-point approximations are quite suitable. Besides, most linear
solvers work with floating point numbers.

On the other hand, using floating point introduces a problem when
comparing terms. The computer will sometimes infer $\omega\cdot\left(\mathbf{u}-\mathbf{t}\right)>0$
even though the exact representation would have $\omega\cdot\left(\mathbf{u}-\mathbf{t}\right)=0$.
We can get around this by modifying the constraints of the linear
program to $\omega\cdot\left(\mathbf{u}-\mathbf{t}\right)\geq\epsilon$
for some sufficiently large $\epsilon>0$. As we see in Figure~\ref{fig: underestimating Gamma},
\begin{figure}
\begin{centering}
\includegraphics[scale=0.17]{underestimating_corners}\qquad\includegraphics[scale=0.17]{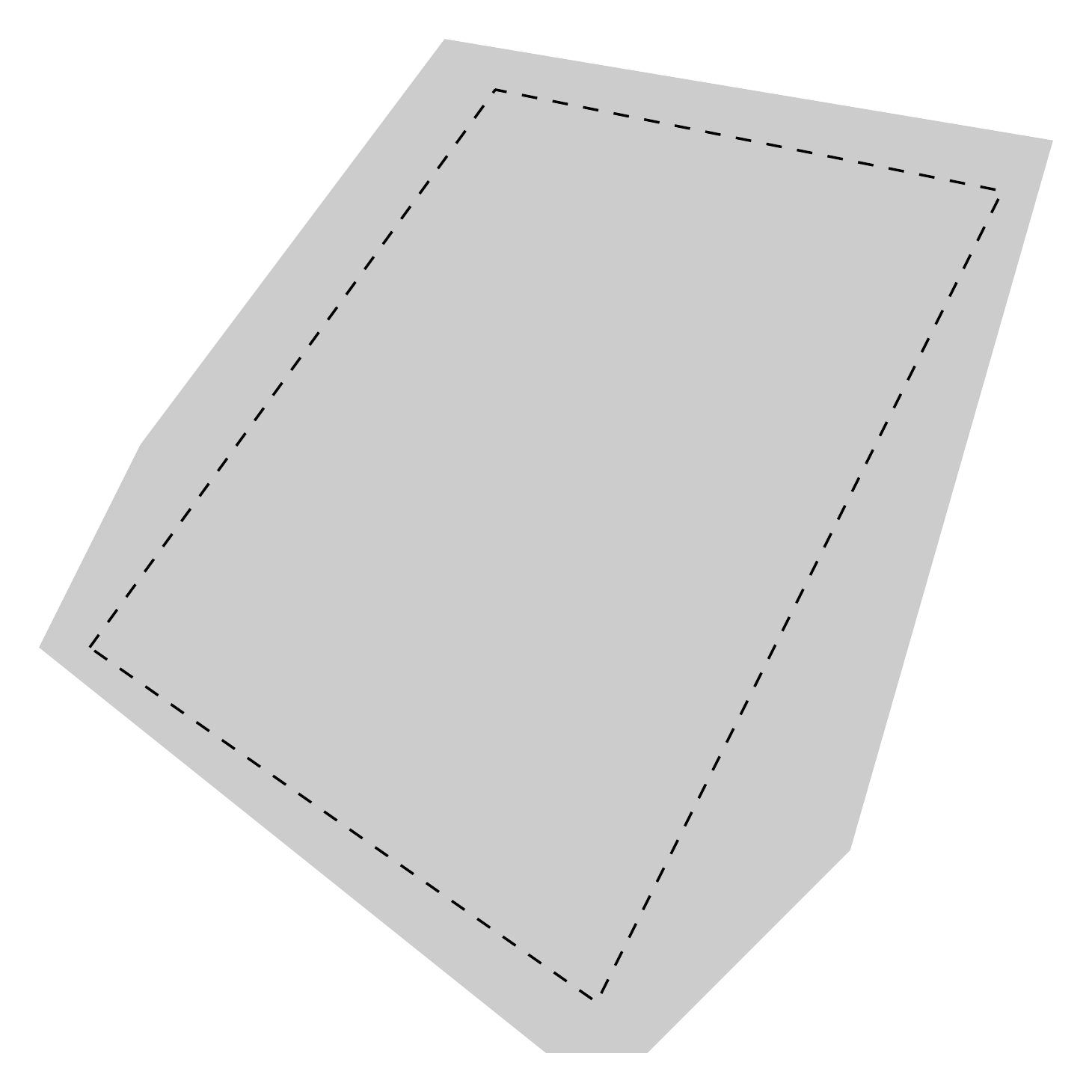}
\par\end{centering}

\caption{\label{fig: underestimating Gamma}Estimation of the corner points
of $C\left(\sigma,G\right)$ by a perturbation of the inequalities.
Each diagram shows a two-dimensional cross-section of a three-dimensional
cone. The dashed lines on the connect actual boundary vectors. The
diagram on the right connects boundary vectors of a perturbation of
the system of linear inequalities. The perturbation is designed to
give us \emph{interior} points of the cone that are close to corner
points.}
\end{figure}
 the polygon no longer connects extrema, but points that approximate
them. We might actually reject some potential leading terms $u$ on
this account, but on the other hand, we \emph{never} waste time with
terms that are incompatible with the current ordering.

This modified linear program is in fact useful for computing feasible
points to the original linear program as well.
\begin{thm}
\label{thm: modified LP has sol iff original LP has sol}Let $\epsilon>0$
and $J$ a finite subset of $\mathbb{N}$. The system of linear inequalities
\[
\lp\left(\sigma,G\right)=\left\{ \mathbf{a}^{\left(j\right)}\cdot\left(y_{1},\ldots,y_{n}\right)>0\right\} _{j\in J}\cup\left\{ y_{j}>0\right\} _{j=1}^{n}
\]
is feasible if and only if the linear program
\[
\left\{ \mathbf{a}^{\left(j\right)}\cdot\left(y_{1},\ldots,y_{n}\right)\geq\epsilon\right\} _{j\in J}\cup\left\{ y_{j}\geq\epsilon\right\} _{j=1}^{n}
\]
is also feasible.\end{thm}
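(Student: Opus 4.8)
The plan is to exploit the fact that both systems are homogeneous in $(y_1,\ldots,y_n)$: each constraint has the form ``linear form $> 0$'' or ``linear form $\geq \epsilon$,'' and the solution set of the strict system is a cone, so a feasible point may be rescaled by any positive constant and remains feasible. The rescaling is exactly what converts a feasible point of the strict system into one whose slacks all clear the threshold $\epsilon$.

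First I would dispatch the easy direction. Suppose $\mathbf{y}^{\ast}=(y_1^{\ast},\ldots,y_n^{\ast})$ satisfies $\mathbf{a}^{(j)}\cdot\mathbf{y}^{\ast}\geq\epsilon$ for all $j\in J$ and $y_k^{\ast}\geq\epsilon$ for $k=1,\ldots,n$. Since $\epsilon>0$, every one of these quantities is strictly positive, so $\mathbf{y}^{\ast}$ already satisfies every strict inequality of $\lp(\sigma,G)$; hence $\lp(\sigma,G)$ is feasible. Conversely, suppose $\lp(\sigma,G)$ has a solution $\mathbf{y}^{\ast}$. Then the finite collection of real numbers $\{\mathbf{a}^{(j)}\cdot\mathbf{y}^{\ast}:j\in J\}\cup\{y_k^{\ast}:1\leq k\leq n\}$ consists entirely of strictly positive values; because $J$ and $\{1,\ldots,n\}$ are finite, their minimum $m$ is attained and $m>0$. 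Set $\lambda=\epsilon/m>0$ and $\mathbf{z}=\lambda\mathbf{y}^{\ast}$. By linearity, $\mathbf{a}^{(j)}\cdot\mathbf{z}=\lambda\bigl(\mathbf{a}^{(j)}\cdot\mathbf{y}^{\ast}\bigr)\geq\lambda m=\epsilon$ for every $j\in J$, and likewise $z_k=\lambda y_k^{\ast}\geq\lambda m=\epsilon$ for every $k$; so $\mathbf{z}$ is feasible for the $\geq\epsilon$ program.

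There is no substantive obstacle here; the one point deserving explicit mention is where finiteness is used. The hypothesis that $J$ is finite (together with the implicit finiteness of the number of variables $n$) is exactly what guarantees that the infimum $m$ of the positive constraint-values at $\mathbf{y}^{\ast}$ is positive, so that a single uniform scaling factor $\lambda=\epsilon/m$ works for all constraints at once. With an infinite constraint set one could only conclude that this infimum is nonnegative, and the argument would break down. Everything else follows immediately from homogeneity and linearity, so I expect the write-up to be short.
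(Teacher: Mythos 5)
Your proof is correct and uses essentially the same idea as the paper's: exploit homogeneity and rescale a feasible point of the strict system by a positive scalar so that every constraint value clears $\epsilon$. The only difference is bookkeeping — you pick one uniform factor $\epsilon/m$ from the minimum constraint value, whereas the paper scales in two stages (first to make the coordinates at least $\epsilon$, then to handle the constraints $\mathbf{a}^{(j)}$); your version is slightly cleaner but not a different argument.
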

\begin{proof}
A solution to the linear program obviously solves the system of linear
inequalities. Thus, suppose the system of linear inequalities has
a solution $\sigma$. Let $\tau=b\sigma$, where $b\in\mathbb{N}$
is chosen large enough that $\tau_{k}\geq\epsilon$ for $k=1,\ldots,n$.
For each $j\in J$, define
\[
\gamma_{j}=\sum_{k=1}^{n}a_{k}^{\left(j\right)}\tau_{k},
\]
then choose $c_{j}\geq1$ such that $c_{j}\gamma_{j}\geq\epsilon$.
Put $d=\max\left\{ c_{j}\right\} _{j\in J}$. Let $\omega=d\tau$.
We have $\omega_{k}\geq\tau_{k}\geq\epsilon$ for each $k=1,\ldots,n$,
and
\[
\mathbf{a}^{\left(j\right)}\cdot\omega=\sum_{k=1}^{n}a_{k}^{\left(j\right)}\omega_{k}=d\sum_{k=1}^{n}a_{k}^{\left(j\right)}\tau_{k}=d\gamma_{j}\geq c_{j}\gamma_{j}\geq\epsilon
\]
for each $j\in J$. We have shown that $\omega$ is a solution to
the linear program, which means that the linear program is also feasible.
\end{proof}
Based on Theorem~\ref{thm: modified LP has sol iff original LP has sol},
we take the following approach:
\begin{enumerate}
\item Replace each constraint $\mathbf{a}^{\left(j\right)}\cdot y>0$ of
$\lp\left(\sigma,G\right)$ with $\mathbf{a}^{\left(j\right)}\cdot y\geq\epsilon$,
add constraints $y_{k}\geq\epsilon$ for $k=1,\ldots,n$, and take
as the objective function the \emph{minimization} of\emph{ $\sum y_{k}$}.
We denote this new linear program as $\mlp\left(\sigma,G\right)$.
\item Solve $\mlp\left(\sigma,G\right)$. This gives us a vector $\tau$
that can serve as a weighted ordering for the terms already computed.
\item Identify some $d\in\mathbb{R}$ such that $\mlp\left(\sigma,G\right)$
intersects the hyperplane $\sum y_{k}=d$, giving us a cross-section
$K$ of the feasible region. This is trivial once we have a solution
$\tau$ to $\mlp\left(\sigma,G\right)$, since we can put $d=1+\sum\tau_{k}$.
\item Compute an approximation to $\Omega$ by maximizing and minimizing
each $y_{k}$ on $K$.
\end{enumerate}
Algorithm \emph{compute\_boundary\_vectors (}Figure~\ref{alg: computing hypercube})\textbf{}
\begin{figure}
\rule[0.5ex]{1\columnwidth}{1pt}

\begin{raggedright}
\textbf{algorithm} \emph{compute\_boundary\_vectors}
\par\end{raggedright}

\begin{raggedright}
\textbf{inputs:}
\par\end{raggedright}
\begin{itemize}
\item $L=\mlp\left(\sigma,G\right)$
\item $\tau\in\mathbb{R}^{n}$ that solves $L$
\end{itemize}
\begin{raggedright}
\textbf{outputs:} $\Psi\subsetneq\mathbb{R}^{n}$, a set of vectors
that approximates the boundary vectors of $\mlp\left(\sigma,G\right)$
\par\end{raggedright}

\begin{raggedright}
\textbf{do:}
\par\end{raggedright}
\begin{enumerate}
\item let $\Psi=\left\{ \right\} $
\item let $d=1+\tau_{1}+\cdots+\tau_{n}$
\item let $L=L\cup\left\{ y_{1}+\cdots+y_{n}=d\right\} $
\item \textbf{for} $k\in\left\{ 1,\ldots,n\right\} $

\begin{enumerate}
\item add to $\Psi$ the solution $\omega$ of $L$ that maximizes $\omega_{k}$
\item add to $\Psi$ the solution $\omega$ of $L$ that minimizes $\omega_{k}$
\end{enumerate}
\item \label{enu: return hyperrect}\textbf{return} $\Psi$
\end{enumerate}
\caption{\label{alg: computing hypercube}Algorithm to compute approximate
boundary vectors to $C\left(G,\sigma\right)$}

\rule[0.5ex]{1\columnwidth}{1pt}
\end{figure}
 gives pseudocode to do this; it generates a set of boundary vectors
$\Psi$ that approximates the set $\Omega$ of boundary vectors of
$C\left(\sigma,G\right)$.

Once we have an approximation $\Psi$ to the boundary vectors $\Omega$,
we use it to eliminate terms that cannot serve as leading terms within
the current cone. Algorithm \emph{identify\_clts\_using\_boundary\_vectors}
(Figure~\ref{alg: using corner approximations}),\textbf{}
\begin{figure}
\rule[0.5ex]{1\columnwidth}{1pt}

\begin{raggedright}
\textbf{algorithm} \emph{identify\_clts\_using\_boundary\_vectors}
\par\end{raggedright}

\begin{raggedright}
\textbf{inputs:}
\par\end{raggedright}
\begin{itemize}
\item $\sigma\in\allorders$, the current term ordering
\item $t=\lt[\sigma]\left(r\right)$, where $r\in R$
\item $U=\supp\left(r\right)\backslash\left\{ t\right\} $
\item $\Psi\subsetneq\mathbb{R}^{n}$, approximations to the boundary vectors
of $C\left(G,\sigma\right)$
\end{itemize}
\begin{raggedright}
\textbf{outputs:} $V$, where $v\in V$ iff $v=t$, or $v\in U$ and
$\psi\left(\mathbf{v}-\mathbf{t}\right)>0$ for some $\psi\in\Psi$
\par\end{raggedright}

\begin{raggedright}
\textbf{do:}
\par\end{raggedright}
\begin{enumerate}
\item let $V=\left\{ t\right\} $
\item \textbf{for} $u\in U$

\begin{enumerate}
\item \textbf{\label{enu: is there psi such that u is larger than t?}if}
$\psi\left(\mathbf{u}-\mathbf{t}\right)>0$ for some $\psi\in\Psi$

\begin{enumerate}
\item add $u$ to $V$
\end{enumerate}
\end{enumerate}
\item return $V$
\end{enumerate}
\caption{\label{alg: using corner approximations}Eliminating terms using approximation
to boundary vectors}

\rule[0.5ex]{1\columnwidth}{1pt}
\end{figure}
 accomplishes this by looking for $u\in\supp\left(r\right)\backslash\left\{ \lt[\sigma]\left(r\right)\right\} $
and $\psi\in\Psi$ such that $\psi\left(\mathbf{u}-\mathbf{t}\right)>0$.
If it finds one, then $u$ is returned as a compatible leading term.

In Section~\ref{sub: avoiding changes of lm}, we pointed out that
creating constraints only for the terms identified as potential leading
terms by the use of boundary vectors can lead to an inconsistency
with previously-chosen terms. For this reason, we not only try to
solve the linear program, but invokes algorithm \emph{monitor\_lts}
(Figure~\ref{alg: ensuring monomials invariant}) to verify that
previously-determined leading terms remain invariant.
\begin{figure}
\rule[0.5ex]{1\columnwidth}{1pt}

\begin{raggedright}
\textbf{algorithm} \emph{monitor\_lts}
\par\end{raggedright}

\begin{raggedright}
\textbf{inputs}
\par\end{raggedright}
\begin{itemize}
\item $G$, the working basis
\item $\sigma\in\allorders$, the old ordering
\item $\tau\in\allorders$, a new ordering
\item $L=\mlp\left(\tau,G\right)$
\end{itemize}
\begin{raggedright}
\textbf{outputs}
\par\end{raggedright}
\begin{itemize}
\item (\texttt{True}, $\mu$) if there exists $\mu\in L$ refining $C\left(G,\sigma\right)$
and $\lt[\mu]\left(g_{\textrm{last}}\right)=\lt[\tau]\left(g_{\textrm{last}}\right)$,
where $g_{\textrm{last}}$ is the newest element of $G$
\item \texttt{False} otherwise
\end{itemize}
\begin{raggedright}
\textbf{do:}
\par\end{raggedright}
\begin{enumerate}
\item let $\mu=\tau$
\item \textbf{while} there exists $g\in G$ such that $\lt[\mu]\left(g\right)\neq\lt[\sigma]\left(g\right)$

\begin{enumerate}
\item \textbf{for each} $g\in G$ whose leading term changes

\begin{enumerate}
\item let\textbf{ }$t=\lt[\sigma]\left(g\right)$, $u=\lt[\mu]\left(g\right)$
\item let $L=L\bigcup\left\{ y\cdot\left(\mathbf{t}-\mathbf{u}\right)\right\} $
\end{enumerate}
\item \textbf{if} $L$ is infeasible \textbf{return} \texttt{False}\textbf{}\\
\textbf{else} let $\mu$ be the solution to $L$
\end{enumerate}
\item \textbf{return} (\texttt{True}, $\mu$)
\end{enumerate}
\caption{\label{alg: ensuring monomials invariant}Ensuring the terms remain
invariant}

\rule[0.5ex]{1\columnwidth}{1pt}
\end{figure}
 If some leading terms would change, the algorithm obtains a compromise
ordering whenever one exists.
\begin{thm}
\label{thm: monitor_lts correct}Algorithm \texttt{monitor\_lts} of
Figure~\ref{alg: ensuring monomials invariant} terminates correctly.\end{thm}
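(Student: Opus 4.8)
The statement asserts two things about \texttt{monitor\_lts}: that it halts on every input, and that when it halts it returns a pair matching the specification. The plan is to prove termination by bounding the number of passes through the \textbf{while} loop, and then to check each of the two exit points against the specification.

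For termination, observe that the \textbf{while} loop of step~2 is the only loop. Whenever its body runs, the guard held, so at least one $g\in G$ has $\lt[\mu]\left(g\right)\neq\lt[\sigma]\left(g\right)$; hence the inner \textbf{for each} loop has at least one element to process and adjoins to $L$ at least one constraint $y\cdot\left(\mathbf{t}-\mathbf{u}\right)$, where $\mathbf{t}=\lt[\sigma]\left(g\right)$, $\mathbf{u}=\lt[\mu]\left(g\right)$, and $t,u\in\supp\left(g\right)$ with $u\neq t$. The key observation is a monotonicity property: once the constraint forcing $\mu\cdot\mathbf{t}>\mu\cdot\mathbf{u}$ lies in $L$, every subsequent feasible solution $\mu'$ of $L$ has $\lt[\mu']\left(g\right)\neq u$, so $u$ is never again selected as the leading term of $g$, and the pair $\left(g,u\right)$ gives rise to a constraint at most once during the whole run. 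Since $G$ is finite and each $\supp\left(g\right)$ is finite, there are at most $\sum_{g\in G}\left(\#\supp\left(g\right)-1\right)$ such pairs; as each pass through the body produces at least one, the \textbf{while} loop executes at most $\sum_{g\in G}\left(\#\supp\left(g\right)-1\right)$ times. Every remaining operation --- solving a linear program (which, being a minimization of $\sum y_{k}$ over a pointed polyhedron contained in the positive orthant, attains its optimum whenever feasible), testing feasibility, and scanning $G$ for changed leading terms --- is finite, so the algorithm halts.

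For correctness there are two exits. If step~2b returns \texttt{False}, then at that moment $L$ is infeasible, and every constraint of $L$ is either one of $\mlp\left(\tau,G\right)$ or an adjoined constraint forcing $\mu\cdot\mathbf{t}_{g}>\mu\cdot\mathbf{u}$ with $\mathbf{t}_{g}=\lt[\sigma]\left(g\right)$. Suppose, toward a contradiction, that some $\mu^{\ast}$ satisfied the specification, i.e.\ refined $C\left(G,\sigma\right)$ --- read as ``$\lt[\mu^{\ast}]\left(g\right)=\lt[\sigma]\left(g\right)$ for every basis element older than $g_{\mathrm{last}}$, and $\lt[\mu^{\ast}]\left(g_{\mathrm{last}}\right)=\lt[\tau]\left(g_{\mathrm{last}}\right)$'', which is what the discussion preceding the algorithm intends. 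Then for each adjoined constraint the exponent $\mathbf{t}_{g}=\lt[\sigma]\left(g\right)=\lt[\mu^{\ast}]\left(g\right)$ dominates every other exponent of $g$ under $\mu^{\ast}$, so $\mu^{\ast}$ satisfies it; and $\mu^{\ast}$ satisfies $\mlp\left(\tau,G\right)$ because the constraints of that program assert exactly the leading-term choices that $\mu^{\ast}$ realizes. Hence $\mu^{\ast}$ is feasible for the current $L$, contradicting infeasibility; so no such $\mu^{\ast}$ exists and \texttt{False} is correct. If instead step~3 returns $\left(\texttt{True},\mu\right)$, the \textbf{while} guard has just failed, so $\lt[\mu]\left(g\right)=\lt[\sigma]\left(g\right)$ for every $g\in G$ and $\mu$ refines $C\left(G,\sigma\right)$ in the sense above; moreover $\mu$ solves the current $L$, which still contains all of $\mlp\left(\tau,G\right)$ since constraints are only adjoined, never deleted, and in particular the part of $\mlp\left(\tau,G\right)$ describing $g_{\mathrm{last}}$ forces $\lt[\mu]\left(g_{\mathrm{last}}\right)=\lt[\tau]\left(g_{\mathrm{last}}\right)$, while the constraints $y_{k}\geq\epsilon$ make $\mu$ a genuine weight vector. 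So the returned pair meets the specification.

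The main obstacle is termination, and within it the monotonicity observation: adjoining $y\cdot\left(\mathbf{t}-\mathbf{u}\right)$ permanently rules out $u$ as a leading term of $g$, so no pair $\left(g,u\right)$ is used twice, which is what yields the finite bound $\sum_{g\in G}\left(\#\supp\left(g\right)-1\right)$ on the loop count. After that, correctness is a matter of unwinding the loop guard; the one point requiring care is the slightly loose notation $C\left(G,\sigma\right)$ and $\mlp\left(\tau,G\right)$, which must be read as ``the $\sigma$-leading terms on the older basis elements together with the $\tau$-leading term on $g_{\mathrm{last}}$'', so that feasibility of the program is genuinely equivalent to existence of the desired compromise ordering.
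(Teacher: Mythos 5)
Your proof is correct and follows the same route as the paper's (which is only three sentences long): termination because the while loop can draw only finitely many distinct constraints, and correctness because constraints are added exactly to repair leading-term changes, so feasibility of the final program is equivalent to the existence of a compromise ordering in $C\left(\tau,G\right)\cap C\left(\sigma,G\right)$. You supply the details the paper calls ``evident'' --- in particular the monotonicity observation that each pair $\left(g,u\right)$ contributes a constraint at most once, which is what actually makes the finiteness argument go through --- and you check both exit points explicitly, so this is a more careful rendering of the same argument rather than a different one.
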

\begin{proof}
Termination is evident from the fact that $G$ is a finite list of
polynomials, so the while loop can add only finitely many constraints
to $L$. Correctness follows from the fact that the algorithm adds
constraints to $\mlp\left(\tau,G\right)$ if and only if they correct
changes of the leading term. Thus, it returns (\texttt{True}, $\mu$)
if and only if it is possible to build a linear program $L$ whose
solution $\mu$ lies in the non-empty set $C\left(\tau,G\right)\cap C\left(\sigma,G\right)$.
\end{proof}
It remains to put the pieces together. 
\begin{figure}
\rule[0.5ex]{1\columnwidth}{1pt}

\begin{raggedright}
\textbf{algorithm} \emph{dynamic\_algorithm\_with\_geometric\_criteria}
\par\end{raggedright}

\begin{raggedright}
\textbf{inputs:} $F\subseteq R$
\par\end{raggedright}

\begin{raggedright}
\textbf{outputs:} $G\subseteq R$ and $\sigma\in\allorders$ such
that $G$ is a Gr\"obner basis of $\ideal{F}$ with respect to $\sigma$
\par\end{raggedright}

\begin{raggedright}
\textbf{do:}
\par\end{raggedright}
\begin{enumerate}
\item Let $G=\left\{ \right\} $, $P=\left\{ \left(f,0\right):f\in F\right\} $,
$\sigma\in\allorders$
\item Let $\textit{rejects}=\left\{ \right\} $, $\Psi=\left\{ e_{k}:k=1,\ldots,n\right\} $
\item \textbf{while} $P\neq\emptyset$\label{enu: while loop of dynamic algorithm-1}

\begin{enumerate}
\item Select $\left(p,q\right)\in P$ and remove it
\item \label{enu: r full reduction of s(p,q) (dynamic)-1}Let $r$ be a
remainder of $\spoly\left(p,q\right)$ modulo $G$
\item \textbf{if} $r\neq0$

\begin{enumerate}
\item Add $\left(g,r\right)$ to $P$ for each $g\in G$
\item Add $r$ to $G$
\item \label{enu: Select a new order-1}Select $\sigma\in\allorders$, using
\emph{identify\_clts\_using\_boundary\_vectors} to eliminate incompatible
terms and \emph{monitor\_lts} to ensure consistency of $\tau$, storing
failed linear programs in $\textit{rejects}$
\item Remove useless pairs from $P$
\end{enumerate}
\item Let $\Psi=$\emph{ compute\_boundary\_vectors}($L$, $\mlp\left(\sigma,G\right)$)
\end{enumerate}
\item \textbf{return} $G$, $\sigma$
\end{enumerate}
\caption{\label{alg: Dynamic Algorithm with New Criteria}A dynamic Buchberger
algorithm that employs the Disjoint Cones and Boundary Vectors criteria}

\rule[0.5ex]{1\columnwidth}{1pt}
\end{figure}

\begin{thm}
Algorithm \texttt{\textup{\emph{dynamic\_algorithm\_with\_geometric\_criteria}}}\textup{\emph{
terminates correctly.}}\end{thm}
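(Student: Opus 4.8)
I would treat termination and correctness separately, leaning on Theorem~\ref{thm: monitor_lts correct}, Proposition~\ref{prop: buchberger's characterization}, and the Disjoint Cones Criterion, and otherwise on the established theory of refinement-based dynamic algorithms.

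\emph{Termination.} First I would check that no basis element ever changes its leading term. A new ordering is adopted only in the order-selection step (line~\ref{enu: Select a new order-1}), which picks a leading term for the freshly adjoined $r$ among those returned by \emph{identify\_clts\_using\_boundary\_vectors} --- a nonempty set, since $\lt[\sigma]\left(r\right)$ is always returned --- and then runs \emph{monitor\_lts}; by Theorem~\ref{thm: monitor_lts correct} the accepted ordering $\mu$ satisfies $\lt[\mu]\left(g\right)=\lt[\sigma]\left(g\right)$ for every $g$ already in the basis, and the fall-back choice $\lt[\sigma]\left(r\right)$ makes \emph{monitor\_lts} succeed trivially, so a legitimate refinement always exists. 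Hence each adopted ordering refines its predecessor on the current basis. Next, an ascending-chain argument: a nonzero remainder $r$ is fully reduced modulo $G$, so no term of $r$ --- in particular the leading term selected for it --- is divisible by any element of $\lt\left(G\right)$, and adjoining $r$ therefore enlarges $\ideal{\lt\left(G\right)}$ strictly; since $R$ is Noetherian (Hilbert basis theorem), this happens only finitely often, so the basis stabilizes after finitely many iterations. Finally, once the basis is fixed, no new pairs enter $P$ (pairs are created only when a polynomial is adjoined, and, the leading terms being stable, no order change reintroduces pairs), while every iteration removes at least one pair; hence $P$ empties and the \textbf{while} loop halts. The subroutines terminate as well: \emph{monitor\_lts} by Theorem~\ref{thm: monitor_lts correct}, and \emph{compute\_boundary\_vectors} because it solves $2n$ linear programs over the bounded, nonempty cross-section $\left\{ \sum y_{k}=d\right\}$ of the cone.

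\emph{Correctness.} On exit $P=\emptyset$. A routine invariant --- using $\spoly\left(f,0\right)=f$ for the input pairs and $\spoly\left(p,q\right)\in\ideal{G}$ thereafter --- gives $\ideal{G}=\ideal{F}$. It remains to show $G$ is a Gr\"obner basis with respect to the final ordering $\sigma$; by Proposition~\ref{prop: buchberger's characterization} it suffices that $\spoly\left(p,q\right)$ reduces to $0$ modulo $G$ for all $p,q\in G$. Every such pair was, at some iteration, removed from $P$: either selected and reduced to a remainder $r$, which, if nonzero, was adjoined to $G$ (so $\spoly\left(p,q\right)$ reduces to $0$ modulo the final $G$), or discarded by a criterion --- such as those of Buchberger or Gebauer--M\"oller --- whose applicability depends only on leading terms. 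Because the algorithm is refinement-only, the leading terms of $G$ are exactly those in force when the pair was handled, so both the completed reduction to $0$ and the applicability of the discard criterion persist under $\sigma$; Buchberger's Characterization then yields that $G$ is Gr\"obner with respect to $\sigma$. This is the reasoning that justifies omitting the pair-reinsertion step of Algorithm~\ref{alg: Dynamic Buchberger Algorithm} in a refinement-only implementation, discussed after Proposition~\ref{prop: caboara second optimization}.

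\emph{Main obstacle.} The delicate point is keeping the new geometry from breaking the refinement-only invariant. Because only an approximation $\Psi$ of the boundary-vector set $\Omega$ is computed, the algorithm may neglect a constraint and later be tempted by an ordering that would flip an established leading term; the heart of the argument is that \emph{monitor\_lts} detects exactly this and either repairs it, producing $\tau\in C\left(\sigma,G\right)\cap C\left(\mu,G\right)$, or rejects the candidate, while the trivial choice $\lt[\sigma]\left(r\right)$ always remains available --- so progress (a strictly larger $\ideal{\lt\left(G\right)}$ when $r\ne0$, a strictly smaller $P$ otherwise) is never blocked, even though underestimating $\Omega$ may cause some beneficial refinements to be missed. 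The Disjoint Cones Criterion contributes only pruning: by it, any program skipped because some member of $\rejects$ lies inside it is genuinely infeasible, so no feasible ordering is discarded on its account and termination is unaffected.
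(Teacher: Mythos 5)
Your proof is correct and rests on the same key insight as the paper's --- that Theorems~\ref{thm: can consider only boundary vectors}, \ref{thm: modified LP has sol iff original LP has sol}, and \ref{thm: monitor_lts correct} together guarantee that line~\ref{enu: Select a new order-1} always produces a \emph{refinement} of the current ordering rather than a change, from which both the Noetherian ascending-chain argument and the persistence of earlier reductions follow. The difference is one of packaging: the paper reduces the claim to Caboara's prior termination and correctness argument, observing only that the new algorithm differs from Caboara's in line~\ref{enu: Select a new order-1} and that the three theorems preserve the refinement invariant that argument requires; you instead rederive everything from first principles, spelling out the ascending-chain argument, the $\ideal{G}=\ideal{F}$ invariant, and the appeal to Buchberger's Characterization. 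Both are valid; yours is more self-contained and more informative about why pair-discarding and prior reductions-to-zero persist under refinement, while the paper's is shorter but leans on the reader trusting Caboara's analysis. One small caveat in your termination step: in asserting that ``no term of $r$'' is divisible by any element of $\lt\left(G\right)$ you are tacitly assuming tail reduction, which goes beyond the paper's stated definition of a remainder (which constrains only $\lt\left(r\right)$); but the paper's proof makes the same implicit assumption when it claims that the compatible leading terms identified by boundary vectors are indivisible by previous leading terms, so you are not introducing any gap the paper doesn't already have.
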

\begin{proof}
The only substantive difference between this algorithm and the dynamic
algorithm presented in~\cite{CaboaraDynAlg} lies in line \ref{enu: Select a new order-1}.
In the original, it reads\end{proof}
\begin{quote}
$\sigma:=\mathbf{RefineCurrentOrder}\left(f_{t+1},F,\sigma\right).$
\end{quote}
Theorems~\ref{thm: can consider only boundary vectors},~\ref{thm: modified LP has sol iff original LP has sol},
and~\ref{thm: monitor_lts correct} are critical to correctness and
termination of the modified algorithm, as they ensures refinement
of the ordering, rather than change. In particular, the compatible
leading terms identified by boundary vectors are indivisible by previous
leading terms; since refinement preserves them, each new polynomial
expands $\ideal{\lt\left(G\right)}$, and the Noetherian property
of a polynomial ring applies.

\section{Experimental results}

The current study implementation, written in Cython for the Sage computer
algebra system~\cite{sage48}, is available at

\begin{center}
\texttt{www.math.usm.edu/perry/Research/dynamic\_gb.pyx}
\par\end{center}

\noindent It is structured primarily by the following functions:
\begin{lyxlist}{MM}
\item [{\texttt{dynamic\_gb}}] is the control program, which invokes the
usual functions for a Buchberger algorithm (creation, pruning, and
selection of critical pairs using the Gebauer-M\"oller algorithm
and the sugar strategy, as well as computation and reduction of of
$S$-polynomials), as well as the following functions necessary for
a dynamic algorithm that uses the criteria of Disjoint Cones and Boundary
Vectors:

\begin{lyxlist}{MM}
\item [{\texttt{choose\_an\_ordering},}] which refines the ordering according
to the Hilbert function heuristic, and invokes:

\begin{lyxlist}{MM}
\item [{\texttt{possible\_lts},}] which applies the Boundary Vectors and
Divisibility criteria;
\item [{\texttt{feasible},}] which tries to extend the current linear program
with constraints corresponding to the preferred leading term; it also
applies the Disjoint Cones criterion, and invokes

\begin{lyxlist}{MM}
\item [{\texttt{monitor\_lts},}] which verifies that an ordering computed
by \texttt{feasible} preserves the previous choices of leading terms;
\end{lyxlist}
\end{lyxlist}
\item [{\texttt{boundary\_vectors},}] which computes an approximation $\Psi$
to the boundary vectors $\Omega$.
\end{lyxlist}
\end{lyxlist}
The \texttt{dynamic\_gb} function accepts the following options:
\begin{itemize}
\item \texttt{static}: boolean, \texttt{True} computes by the static method,
while \texttt{False} (the default) computes by the dynamic method;
\item \texttt{strategy}: one of \texttt{'sugar'}, \texttt{'normal'} (the
default), or \texttt{'mindeg'};
\item \texttt{weighted\_sugar}: boolean, \texttt{True} computes sugar according
to ordering, while \texttt{False} (the default) computes sugar according
to standard degree;
\item \texttt{use\_boundary\_vectors}: boolean, default is \texttt{True};
\item \texttt{use\_disjoint\_cones}: boolean, default is \texttt{True}.
\end{itemize}
At the present time, we are interested in structural data rather than
timings. To that end, experimental data must establish that the methods
proposed satisfy the stated aim of reducing the size and number of
linear programs constructed; in other words, the algorithm invokes
the refiner only when it has high certainty that it is needed. Evidence
for this would appear as the number of linear programs it does \emph{not}
construct, the number that it \emph{does}, and the ratio of one to
the other. We should also observe a relatively low number of failed
linear programs.

Table~\ref{tab: comparison dyn, static} summarizes the performance
of this implementation on several benchmarks. Its columns indicate:
\begin{itemize}
\item the name of a polynomial system tested;
\item the number of linear programs (i.e., potential leading terms)

\begin{itemize}
\item rejected using approximate boundary vectors,
\item rejected using disjoint cones,
\item solved while using the two new criteria,
\item solved while not using the new criteria, and
\item failed;
\end{itemize}
\item the ratio of the linear programs solved using the new criteria to
the number solved without them; and
\item the number of constraints in the final program, both using the new
criteria, and not using them.
\end{itemize}
Both to emphasize that the algorithm really is dynamic, and to compare
with Caboara's original results, Table~\ref{tab: comparison dyn, static}
also compares:
\begin{itemize}
\item the size of the Gr\"obner basis generated by the dynamic algorithm,
in terms of

\begin{itemize}
\item the number of polynomials computed, and
\item the number of terms appearing in the polynomials of the basis;
\end{itemize}
\item the size of the Gr\"obner basis generated by \textsc{Singular}'s
\texttt{std()} function with the grevlex ordering, in the same terms.
\end{itemize}
\begin{table}
\begin{centering}
\begin{sideways}
\begin{tabular}{|c|c|c|c|c|c|c|c|c|c|c|c|c|}
\multicolumn{1}{c|}{} & \multicolumn{8}{c}{linear programs} & \multicolumn{4}{c|}{final size}\tabularnewline
\multicolumn{1}{c|}{} & \multicolumn{2}{c|}{rejected by\ldots{}} & \multicolumn{2}{c|}{solved} &  &  & \multicolumn{2}{c|}{\#constraints} & \multicolumn{2}{c|}{dynamic} & \multicolumn{2}{c|}{static}\tabularnewline
\multicolumn{1}{c|}{system} & corners & disjoint & cor+dis & div only & $\frac{\mathrm{cor}+\mathrm{dis}}{\mathrm{div\, only}}$ & failed & cor+dis & div only & \#pols & \#terms & \#pols & \#terms\tabularnewline
\hline 
\hline 
Caboara 1 & 22 & 0 & 10 & 25 & 0.400 & 0 & 20 & 29 & 35 & 137 & 239 & 478\tabularnewline
\hline 
Caboara 2 & 19 & 0 & 20 & 10 & 2.222 & 10 & 26 & 17 & 21 & 42 & 553 & 1106\tabularnewline
\hline 
Caboara 4 & 64 & 0 & 10 & 20 & 0.500 & 2 & 26 & 30 & 9 & 20 & 13 & 23\tabularnewline
\hline 
Caboara 5 & 81 & 0 & 19 & 6 & 3.167 & 14 & 24 & 20 & 12 & 67 & 20 & 201\tabularnewline
\hline 
Caboara 6 & 15 & 0 & 7 & 8 & 0.875 & 1 & 16 & 16 & 7 & 15 & 7 & 15\tabularnewline
\hline 
Caboara 9 & 2 & 0 & 4 & 5 & 0.800 & 0 & 10 & 11 & 7 & 14 & 37 & 74\tabularnewline
\hline 
Cyclic-5 & 379 & 0 & 16 & 327 & 0.049 & 5 & 31 & 61 & 11 & 68 & 20 & 85\tabularnewline
\hline 
Cyclic-6 & 4,080 & 0 & 58 & 2800 & 0.021 & 43 & 56 & 250 & 20 & 129 & 45 & 199\tabularnewline
\hline 
Cyclic-7 & 134,158 & 12 & 145 & {*} & {*} & 108 & 147 & {*} & 63 & 1,049 & 209 & 1,134\tabularnewline
\hline 
Cyclic-5 hom. & 128 & 0 & 17 & 259 & 0.066 & 7 & 33 & 60 & 11 & 98 & 38 & 197\tabularnewline
\hline 
Cyclic-6 hom. & 1,460 & 0 & 25 & 1,233 & 0.020 & 16 & 39 & 303 & 33 & 476 & 99 & 580\tabularnewline
\hline 
Cyclic-7 hom. & 62,706 & 0 & 38 & {*} & {*} & 20 & 105 & {*} & 222 & 5,181 & 443 & 3,395\tabularnewline
\hline 
Katsura-6 & 439 & 5 & 37 & 108 & 0.343 & 18 & 53 & 68 & 22 & 54 & 22 & 54\tabularnewline
\hline 
Katsura-7 & 1,808 & 3 & 43 & 379 & 0.113 & 33 & 109 & 205 & 49 & 104 & 41 & 105\tabularnewline
\hline 
Katsura-6 hom. & 419 & 13 & 77 & 326 & 0.236 & 46 & 113 & 281 & 23 & 164 & 22 & 160\tabularnewline
\hline 
Katsura-7 hom. & 1,920 & 24 & 114 & 1,375 & 0.083 & 63 & 234 & 731 & 46 & 352 & 41 & 354\tabularnewline
\hline 
\end{tabular}
\end{sideways}
\par\end{centering}

{*}This system was terminated when using only the Divisibility Criterion,
as the linear program had acquired more than 1000 constraints.

\caption{\label{tab: comparison dyn, static}Dynamic algorithm with sugar strategy,
applying Divisibility Criterion after boundary vectors. Data for static
algorithm included for comparison.}
\end{table}
 The systems ``Caboara $i$'' correspond to the example systems
``Es $i$'' from~\cite{CaboaraDynAlg}, some of which came from
other sources; we do not repeat the details here. We verified by brute
force that the final result was a Gr\"obner basis.

The reader readily sees that the optimizations introduced in this
paper accomplish the stated goals. By itself, the method of boundary
vectors eliminates the majority of incompatible leading terms. In
the case of dense polynomial systems, it eliminates the vast majority.
This means that far, far fewer linear programs are constructed, and
those that are constructed have far fewer constraints than they would
otherwise. While the number of inconsistent linear programs the algorithm
attempted to solve (the ``failed'' column) may seem high in proportion
to the number of programs it did solve (``solved''), this pales
in comparison to how many it would have attempted without the use
of boundary vectors; a glance at the ``div only'' column, which
consists of monomials that were eliminated only by the Divisibility
Criterion, should allay any such concerns.
\begin{rem}
In two cases, the new criteria led the algorithm to compute \emph{more}
linear programs than if it had used the Divisibility Criterion alone.
There are two reasons for this.
\begin{itemize}
\item One of the input systems (Caboara 2) consists exclusively of inhomogeneous
binomials with many divisible terms. This setting favors the Divisibility
Criterion. The other system (Caboara 5) also contains many divisible
terms.
\item For both systems, the sample set of boundary vectors wrongly eliminates
compatible monomials that should be kept. While this suggests that
the current strategy of selecting a sample set of boundary vectors
leaves much to be desired, the dynamic algorithm computes a smaller
basis than the static, and with fewer $S$-poly\-nomials, even in
these cases.
\end{itemize}
\end{rem}
It should not startle the reader that the dynamic algorithm performs
poorly on the homogeneous Katsura-$n$ systems, as Caboara had already
reported this. All the same, boundary vectors and disjoint cones minimize
the cost of the dynamic approach.

Many of our Gr\"obner bases have different sizes from those originally
reported by Caboara. There are several likely causes:
\begin{itemize}
\item The original report had typographical errors. We have verified and
corrected this in some cases, but some results continue to differ.
\item The static ordering used here may order the variables differently
from~\cite{CaboaraDynAlg}, which did not documented this detail.
For example, Caboara reports a basis of only 318 polynomials for Caboara~2
when using the static algorithm with grevlex, but Sage (using \textsc{Singular})
finds 553.
\item Several choices of leading term can have the same tentative Hilbert
function, but one of the choices is in fact better than the others
in the long run. The original implementation may have chosen differently
from this one. In particular~\cite{CaboaraDynAlg} gave a special
treatment to the input polynomials, considering the possible leading
term choices for \emph{all} of them simultaneously, and not sequentially,
one-by-one.
\end{itemize}
We conclude this section with a word on complexity. While the worst
case time complexity of the simplex algorithm is exponential~\cite{KleeMinty72},
on average it outperforms algorithms with polynomial time complexity.
The computation of boundary vectors increases the number of invocations
of simplex, but the large reduction in the number of monomials considered
more than compensates for this. The space requirements are negligible,
as we need only $2n$ boundary vectors at any one time, and the reduction
in the number and size of the linear programs means the algorithm
needs to remember only a very few disjoint cones. Considering how
rarely the disjoint cones are useful, it might be worthwhile not to
implement them at all, but we have not observed them to be a heavy
burden at the current time.

\section{Conclusion, future work}

We set out to reduce the size and number of linear programs used by
a dynamic algorithm to compute a Gr\"obner basis. Geometrical intuition
led us to two methods that work effectively and efficiently. While
the effect with the systems tested by Caboara was only moderate, and
in some cases counterproductive, the story was different with the
dense benchmark systems. In these cases, the number of refinements
approached insignificance, and we continued to achieve good results.
The final Gr\"obner basis was always of a size significantly smaller
than grevlex; only a few refinements were required for the algorithm
to be very effective.

A significant restraint imposed by many computer algebra systems is
that a term ordering be defined by integers. As Sage is among these,
this has required us to solve not merely linear programs, but pure
integer programs. Integer programming is much more intensive than
linear programming, and its effect was a real drag on some systems.
There is no theoretical need for this; we plan to look for ways to
eliminate this requirement, or at least mitigate it.

An obvious next step is to study various ambiguities in this approach.
This includes traditional questions, such as the effect of the selection
strategy of critical pairs, and also newer questions, such as generating
the sample set of boundary vectors. We followed Caboara's approach
and used the sugar strategy. We expect the normal strategy~\cite{Buchberger85}
to be useful only rarely, while signature-based strategies~\cite{EderPerrySigBasedAlgorithms}\cite{Fau02Corrected},
which eliminate useless pairs using information contained in the leading
terms of a module representation, could combine with the dynamic algorithm
to expand significantly the frontiers of the computation of Gr\"obner
bases, and we are working towards such an approach.

Another optimization would be to use sparse matrix techniques to improve
the efficiency of polynomial reduction in a Gr\"obner basis algorithm,
\emph{a la} F4. Combining this with a dynamic algorithm is comparable
to allowing some column swaps in the Macaulay matrix, something that
is ordinarily impossible in the middle of a Gr\"obner basis computation.

\thanks{The authors would like to thank Nathann Cohen for some stimulating
conversations, and his assistance with Sage's linear programming facilities.}

\section*{Appendix}

Here we give additional information on one run of the inhomogeneous
Cyclic-6 system. Computing a Gr\"obner basis for this ideal with
the standard sugar strategy requires more $S$-poly\-nomials than
doing so for the ideal of the homogenized system; in our implementation,
the number of $S$-poly\-nomials is roughly equal to computing the
basis using a static approach. (In general, the dynamic approach takes
fewer $S$-poly\-nomials, and the weighted sugar strategy is more
efficient on this ideal.) Information on timings was obtained using
Sage's profiler:
\begin{itemize}
\item for the static run, we used
\end{itemize}
\begin{center}
\texttt{\%prun B = dynamic\_gb(F,static=True,strategy='sugar')};
\par\end{center}
\begin{itemize}
\item for the dynamic run, we used
\end{itemize}
\begin{center}
\texttt{\%prun B = dynamic\_gb(F,static=False,strategy='sugar')}.
\par\end{center}

We obtained structural data by keeping statistics during a sample
run of the program.

While Cython translates Python instructions to C code, then compiles
the result, the resulting binary code relies on both the Python runtime
and Python data structures; it just works with them from compiled
code. Timings will reflect this; we include them merely to reassure
the reader that this approach shows promise.

Our implementation of the dynamic algorithm uses 377 $S$-poly\-nomials
to compute a Gr\"obner basis of 20 polynomials, with 250 reductions
to zero. At termination, $\rejects$ contained 43 sets of constraints,
which were never used to eliminate linear programs. As for timings,
one execution of the current implementation took 25.45 seconds. The
profiler identified the following functions as being the most expensive.
\begin{itemize}
\item Roughly half the time, 11.193 seconds, was spent reducing polynomials.
(All timings of functions are cumulative: that is, time spent in this
function and any other functions it invokes.) This is due to our having
to implement manually a routine that would reduce polynomials and
compute the resulting sugar. A lot of interpreted code is involved
in this routine.
\item Another third of the time, 8.581 seconds, was spent updating the critical
pairs using the Gebauer-M\"oller update algorithm. Much of this time
(3.248 seconds) was spent computing the lcm of critical pairs.
\end{itemize}
The majority of time (5/6) was spent on functions that are standard
in the Buchberger algorithm! Ordinarily, we would not expect an implementation
to spend that much time reducing polynomials and updating critical
pairs; we are observing a penalty from the Python runtime and data
structures. The size of this penalty naturally varies throughout the
functions, but this gives the reader an idea of how large it can be.

Other expensive functions include:
\begin{itemize}
\item About 4.6 seconds were spent in \texttt{choose\_an\_ordering}. Much
of this time (2.4 seconds) was spent sorting compatible leading terms
according to the Hilbert heuristic.
\item The 238 invocations of Sage's \texttt{hilbert\_series} and \texttt{hilbert\_polynomial}
took 1.5 seconds and 0.8 seconds, respectively. As Sage uses \textsc{Singular}
for this, some of this penalty is due to compiled code, but \textsc{Singular}'s
implementation of these functions is not state-of-the-art; techniques
derived from~\cite{Bigatti97} and~\cite{RouneHilbert2010} would
compute the Hilbert polynomial incrementally and quickly.
\item The 57 invocations of \texttt{feasible} consumed roughly 1.2 seconds.
This includes solving both continuous and pure integer programs, and
suffers the same penalty from interpreted code as other functions.
\end{itemize}
It is worth pointing out what is \emph{not} present on this list:
\begin{itemize}
\item Roughly one quarter of a second was spent in \texttt{monitor\_lts}.
\item The \texttt{boundary\_vectors} function took roughly one and a half
tenths of a second (.016).
\item Less than one tenth of a second was spent applying the Boundary Vectors
Criterion in \texttt{possible\_lts}.
\end{itemize}
If we remove the use of the Boundary Vector and Disjoint Cones criteria,
the relative efficiency of \texttt{feasible} evaporates; the following
invocation illustrates this vividly:

\begin{center}
\texttt{}%
\begin{minipage}[t]{0.8\columnwidth}%
\texttt{\%prun B = dynamic\_gb(F, strategy='sugar',}

\texttt{~~~~~use\_boundary\_vectors=False,}

\texttt{~~~~~use\_disjoint\_cones=False)}%
\end{minipage}
\par\end{center}

\noindent Not only is the Divisibility Criterion unable to stop us
from solving 2,820 linear programs, but the programs themselves grow
to a size of 247 constraints. At~41 seconds, \texttt{feasible} takes
nearly twice as long as entire the computation when using the new
criteria! This inability to eliminate useless terms effect cascades;
\texttt{hilbert\_polynomial} and \texttt{hilbert\_series} are invoked
3,506 times, taking 12.6 and 28.6 seconds, respectively; \texttt{choose\_an\_ordering}
jumps to 92.7 seconds, with 44 seconds wasted in applying the heuristic.
By contrast, the timings for reduction of polynomials and sorting
of critical pairs grow much, much less, to 20 seconds and 12.9 seconds,
respectively. (The increase in reduction corresponds to an increase
in the number of $S$-poly\-nomials, probably due to a different
choice of leading term at some point -- recall that the approximation
of boundary vectors excludes some choices.) The optimizations presented
here really do remove one of the major bottlenecks of this method.

We conclude by noting that when we use the Disjoint Cones criterion
alone, 1,297 invalid monomials are eliminated, but the number of constraints
in the final linear program increases to 250; this compares to 4,080
monomials that Boundary Vectors alone eliminates, with 56 constraints
in the final linear program.

\bibliographystyle{spmpsci}
\bibliography{/home/perry/common/Research/researchbibliography}

\end{document}